\newtheorem{theo}{Theorem}[section]
\newtheorem{lem}[theo]{Lemma}
\newtheorem{coro}[theo]{Corollary}
\newtheorem{prop}[theo]{Proposition}
\newtheorem{defi}[theo]{Definition}
\theoremstyle{remark}
\theoremstyle{rem}
\newtheorem{rem}[theo]{Remark}
\theoremstyle{ex}
\newtheorem{example}[theo]{Example}
\numberwithin{equation}{section}
\newcommand{\C}{{\mathbb C}}
\newcommand{\Z}{{\mathbb Z}}
\newcommand{\osp}{{\rm\mathfrak{osp}}}
\newcommand{\Sl}{{\rm\mathfrak{sl}}}
\newcommand{\g}{{\mathfrak g}}
\newcommand{\de}{\delta}
\newcommand{\ga}{\gamma}
\newcommand{\Q}{{\mathcal Q}}
\newcommand{\hQ}{{\widehat{\mathcal Q}}}
\newcommand{\cq}{{\mathcal{C}_q}}
\newcommand{\cqq}{{\mathfrak{C}_q}}
\newcommand{\Uq}{{{\rm  U}_q}}
\newcommand{\U}{{\rm  U }}
\newcommand{\Dr}{{\rm U}^D}
\newcommand{\D}{\mathfrak{U}^D}
\newcommand{\UU}{{\mathfrak U}}
\newcommand{\cI}{{\mathcal I}}
\newcommand{\e}{\xi^+ }
\newcommand{\f}{\xi^-}
\newcommand{\ef}{\xi^{\pm}}
\newcommand{\qe}{e}
\newcommand{\qf}{f}
\newcommand{\x}{\xi'}
\newcommand{\h}{\kappa}
\newcommand{\qk}{\gamma}
\newcommand{\hh}{\hat{\kappa}}
\newcommand{\X}{X}
\newcommand{\HH}{H}
\newcommand{\ka}{\mathfrak{k}}
\newcommand{\ta}{\mathfrak{t}}
\newcommand{\xp}{{\rm {exp}}}
\newcommand{\ad}{{ \mbox{Ad}}}
\newcommand{\up}{\Upsilon}
\newcommand{\fU}{{\mathfrak U}}
\begin{document}

\title[Drinfeld realisation and Vertex operator representations]{Drinfeld realisations and Vertex operator representations of \\ quantum affine superalgebras}

\author{Ying Xu}
\author{R.  B.  Zhang}

\address[Xu]{School of Mathematics, Hefei University of Technology, Anhui Province, 230009, China}
\email{xuying@hfut.edu.cn}
\address[Xu, Zhang]{School of Mathematics and Statistics,
University of Sydney, NSW 2006, Australia}
\email{ruibin.zhang@sydney.edu.au}

\begin{abstract}
Drinfeld realisations are constructed for the quantum affine superalgebras of the series $\osp(1|2n)^{(1)}$,$\Sl(1|2n)^{(2)}$ and $\osp(2|2n)^{(2)}$. By using the
realisations, we develop vertex operator representations and classify the finite dimensional irreducible representations for these quantum affine superalgebras.

\end{abstract}
\subjclass[2010]{81R10,17B37,17B69}
\keywords{Quantum affine superalgebras; Drinfeld realisations; vertex operator representations}

\maketitle

\tableofcontents

%====================================================================%
\section{Introduction}\label{intro}

Quantum supergroups associated with simple Lie superalgebras and their affine analogues 
were introduced \cite{BGZ, Y94, ZGB91b}  and extensively studied 
(see, e.g., \cite{Z93, Z98} and references therein)  in the 90s. 
They have important applications in a variety of areas, most notably, topology of knots and $3$-manifolds \cite{Z92a, Z95}, and the theory of integrable models of Yang-Baxter type \cite{BGZ, ZBG91}.  In particular,
finite dimensional representations of quantum affine superalgebras play a crucial
role in the latter area in constructing integrable models by solving the
$\Z_2$-graded Jimbo equations \cite{BGZ} to obtain solutions of the Yang-Baxter equation.
In recent years there has been a resurgence of interest in quantum supergroups and quantum affine superalgebras from the point of view algebra and representation theory.

In this paper,  we will construct the Drinfeld realisations, develop vertex operator representations and classify the irreducible finite dimensional representations for  the quantum affine superalgebras $\Uq(\g)$ associated with the following series of affine Lie superalgebras $\g$:
\begin{eqnarray}\label{eq:g}
\osp(1|2n)^{(1)},  \quad \Sl(1|2n)^{(2)}, \quad \osp(2|2n)^{(2)}, \quad n\ge 1.
\end{eqnarray}

We wish to point out that little is known about Drinfeld realisations, vertex operator representations, or the classification of irreducible finite dimensional representation for quantum affine superalgebras, except for untwisted type $A$. Even in this case, the study of vertex operator representations is not very systematic.  

The affine Lie superalgebras in \eqref{eq:g} do not have isotropic odd roots, thus have much similarity to ordinary affine Lie algebras (but we wre unable to find a proper treatment of their vertex operator representations). However, 
the quantum  affine superalgebras associated with these affine Lie superalgebras have some strikingly new features. In particular, they  admit irreducible integrable highest weight representations which do not have classical (i.e., $q\to 1$) counter parts. Some of the irreducible vertex operation representations constructed in this paper are of this kind. 

Below is a brief description of the main results of the paper and techniques used to prove them.

\smallskip
%\subsection{}
\noindent{\bf 1.1.}
The Drinfeld realisation of a quantum affine algebra \cite{Dr}
is a quantum analogue of the loop algebra realisation of an affine Lie algebra.
It is indispensable for studying vertex operator representations \cite{Jn1, JnM} and
finite dimensional representations \cite{CP1, CP2} of the quantum affine algebra.
The equivalence between the Drinfeld realisation and usual Drinfeld-Jimbo presentation in terms of Chevalley generators was known to Drinfeld \cite{Dr}, and has been investigated in a number of papers,  see, e.g.,  \cite{Be,De, Jn2, JZ}.

Previously Drinfeld realisations for quantum affine superalgebras were only known for untwisted types $A$ \cite{Y99} and $D(2, 1; \alpha)$ \cite{H} in the standard root systems.  The realisation in type $A$  formed the launching pad for the study of integrable representations of the quantum affine special linear superalgebra in \cite{WZ, Zh}.

In this paper, we construct the Drinfeld realisation for the quantum affine superalgebra associated with each of the affine Lie superalgebras in \eqref{eq:g}, see Definition \ref{def:DR}. We establish in Theorem \ref{them:DR-iso} an isomorphism between the quantum superalgebra of Definition \ref{def:DR} and the corresponding quantum affine superalgebra presented in the standard way by using Chevalley generators and defining relations \cite{Z2}.  As explained in Remark \ref{rem:hopf-iso}, the isomorphism in Theorem \ref{them:DR-iso} can in fact be interpreted as an
isomorphism of Hopf superalgebras. 

We prove Theorem \ref{them:DR-iso} by relating the Drinfeld realisations of the quantum
affine superalgebras to Drinfeld realisations of some ordinary quantum affine algebras,
and then applying Drinfeld's theorem \cite{Dr}. This makes essential use of
the notion of quantum correspondences introduced in \cite{XZ, Z2} (also see \cite{MW}).
A quantum correspondence between a pair $(\g, \g')$ of (affine) Lie superalgebras
is a Hopf superalgebra isomorphism between the corresponding quantum (affine) superalgebras.
Here we regard the category of vector superspaces as a braided tensor category, 
and a Hopf superalgebra as a Hopf algebra over this category. 
References \cite{XZ, Z2} contain a systematical treatment of quantum correspondences, 
some of which appeared as S-dualities in string theory in
work of Mikhaylov and Witten \cite{MW}. For convenience, we give in
Lemma \ref{lem:correspon} a concise description of the quantum correspondences 
used in this paper.

\smallskip
%\subsection{}
\noindent{\bf 1.2.} Apart from the case of untwisted type $A$, the construction of vertex operator representations and classification of irreducible finite dimensional representation for quantum affine superalgebras were hardly studied previously.  What hinders progress in the area is the lack of Drinfeld realisations  

By making use of the Drinfeld realisation obtained for the quantum affine superalgebras associated with the affine Lie superalgebras in \eqref{eq:g}, we construct vertex operator representations of the quantum affine superalgebras at level $1$, and also classify the finite dimensional irreducible representations. The main results are given in Theorem \ref{them:v.o} (and its variations in Sections \ref{sect:other-level-1}  and \ref{sect:other-vacuum}) and Theorem \ref{theo-finite module}.

The vertex operator representations of $\Uq(\g)$ constructed here are realised on quantum Fock spaces; they are level $1$ irreducible integrable highest weight representations relative to the standard triangular decomposition. [Recall that there exists a well defined notion of
integrable highest weight representations \cite{Z2} for $\Uq(\g)$ with $\g$ belonging to \eqref{eq:g},  even though this is not true for most of the other quantum affine superalgebras.]
Our construction here is heavily influenced by work of Jing \cite{Jn1, JnM} on  the vertex operator representations of ordinary quantum affine algebras. As we have mentioned already, some of the
vertex operator representations constructed here do not have classical (i.e., $q\to1$) limits.

The finite dimensional irreducible representations of $\Uq(\g)$ are shown to be level $0$ highest weight representations relative to another triangular decomposition.
We obtain the necessary and sufficient conditions on the highest weights for
irreducible highest weight representations to be finite dimensional. The conditions are described in terms of Drinfeld’s highest weight polynomials.  The proof of the classification theorem (Theorem \ref{theo-finite module}) makes essential use of results of Chari and Pressley in \cite{CP0, CP1, CP2} on ordinary quantum affine algebras. Another important ingredient in the proof is the quantum correspondences between affine Lie superalgebras discussed in Section \ref{sec:DR}.  

\medskip 
\noindent{\bf 1.3.} 
Finite dimensional representations of quantum affine superalgebras play a crucial role in constructing soluble models of Yang-Baxter type \cite{BGZ}; vertex  operator representations form an integral part of conformal field theory.  Thus results of this paper have direct applications in mathematical physics.

Throughout the paper,  $K:=\C(q^{1/2})$ is the field of rational functions in the indeterminate $q^{1/2}$.
For any element $z\ne 0$ or a root of $1$ in a field,
\[\begin{aligned}
&[0]_z=[0]_z!=1,\quad [k]_z=\frac{z^k-z^{-k}}{z-z^{-1}},\quad \mbox{for}\ \  k\in\Z^*, \\
&[N]_z!=\prod_{i=1}^N[i]_z, \quad \begin{bmatrix} N\\k\end{bmatrix}_z=\frac{[N]_z!}{[N-k]_z![k]_z!},\quad \mbox{for}\ \  k\leq N \in\Z_{>0}.
\end{aligned}
\]

%==================================================================%
\section{ Drinfeld realisations of quantum affine superalgebras}\label{quantum}
%%
%\subsection{ Quantum affine superalgebras}\label{quan-super}
Consider the affine Kac-Moody superalgebras given in \eqref{eq:g}:
\[
\begin{picture}(120, 28)(40,-12)
%\put(75,15){\small $\osp(1|2n)^{(1)}$}
\put(-42,-5) {$\osp(1|2n)^{(1)}$}
\put(38,0){\circle{10}}
\put(35,-12){\tiny\mbox{$\alpha_0$}}
\put(43,1){\line(1, 0){17}}
\put(43,-1){\line(1, 0){17}}
\put(57,-3){$>$}
\put(69, 0){\circle{10}}
\put(68,-12){\tiny\mbox{$\alpha_1$}}
\put(74, 0){\line(1, 0){16}}
%\put(66,0){\circle{10}}
%\put(70, 0){\line(1, 0){20}}
\put(91, -1){\dots}
\put(105, 0){\line(1, 0){18}}
\put(128, 0){\circle{10}}
\put(133,1){\line(1, 0){17}}
\put(133,-1){\line(1, 0){17}}
\put(145,-3){$>$}
\put(157, 0){\circle*{10}}
\put(150,-12){\mbox{\tiny$\alpha_{n}$}}
\end{picture}
\]
\[
\begin{picture}(150, 60)(-20,-28)
%\put(35, 15){\small $\Sl(1|2n)^{(2)}$}
\put(-85,-5) {$\Sl(1|2n)^{(2)}$}
\put(0, 15){\circle{10}}
\put(-5,23){\tiny$\alpha_0$}
\put(0, -16){\circle{10}}
\put(-4,-28){\tiny$\alpha_1$}
\put(14, -3){\line(-1, -1){10}}
\put(14, 3){\line(-1, 1){10}}
\put(19, 0){\circle{10}}
\put(24, 0){\line(1, 0){19}}
\put(45, -1){\dots}
\put(60,0){\line(1, 0){18}}
\put(83, 0){\circle{10}}
\put(88,1){\line(1, 0){17}}
\put(88,-1){\line(1, 0){17}}
\put(100,-3){$>$}
\put(112, 0){\circle*{10}}
\put(106,-15){\tiny$\alpha_{n}$}
\end{picture}
\]
\[
\begin{picture}(150, 30)(-10,-14)
%\put(45,15){\small $\osp(2|2n)^{(2)}$}
\put(-75,-5){$\osp(2|2n)^{(2)}$}
\put(6,0){\circle*{10}}
\put(3,-12){\tiny $\alpha_0$}
\put(12,1){\line(1, 0){18}}
\put(12,-1){\line(1, 0){18}}
\put(9,-3){$<$}
\put(35, 0){\circle{10}}
\put(40, 0){\line(1, 0){20}}
\put(61, -1){\dots}
\put(75, 0){\line(1, 0){18}}
\put(98, 0){\circle{10}}
\put(103,1){\line(1, 0){17}}
\put(103,-1){\line(1, 0){17}}
\put(115,-3){$>$}
\put(126, 0){\circle*{10}}
\put(115,-12){ \tiny$\alpha_{n}$}
\end{picture}
\]
Here the notation is as in \cite{K78}, with $\osp(1|2n)^{(1)}$ denoting the untwisted affine  Lie superalgebra of $\osp(1|2n)$,  and $\osp(2|2n)^{(2)}$ and $\Sl(1|2n)^{(2)}$ the
twisted (by order two automorphisms) affine Lie superalgebras of $\osp(2|2n)$ and $\Sl(1|2n)$ respectively.
In the Dynkin diagrams, the black nodes denote the odd simple roots, while the white ones are even simple roots. More details on these root systems can be found in \cite{K78} (also see \cite{XZ, Z2}).
 %Note in particular that these affine Lie superalgebras do not have isotropic odd roots.

Let $\g$ be an affine Lie superalgebra in \eqref{eq:g}.
We denote by
$A=(a_{ij})$ its Cartan matrix, which is realised in terms of the set of simple roots
$\Pi=\{\alpha_i \mid i=0, 1, 2, \dots, n\}$
with $a_{i j} = \frac{2(\alpha_i, \alpha_j)}{(\alpha_i, \alpha_i)}$. A simple root
$\alpha_i$ is odd if the corresponding node in the Dynkin diagram is black, and is even otherwise. Set $q_i=q^{\frac{(\alpha_i,\alpha_i)}{2}}$ for all $\alpha_i\in\Pi$.

For any superalgebra $A=A_{\bar{0}}\bigoplus A_{\bar{1}}$, we define the parity $[\,]: A\rightarrow \Z_{2}=\{0, 1\}$ of homogeneous elements of $A$ as follows: $[a]={0}$ if $a\in A_{\bar{0}}$ and $[a]={1}$  if $a\in A_{\bar{1}}$.

\subsection{Drinfeld realisations}\label{sect:Dr-def}
Let us first recall the standard definition of quantum affine superalgebras given in terms of Chevalley generators and defining relations. 
\begin{defi}[\cite{Z2}]\label{defi:quantum-super}
Let $\g=\osp(1|2n)^{(1)}$, $\Sl(1|2n)^{(2)}$ or $\osp(2|2n)^{(2)}$.
The quantum affine  superalgebra $\Uq(\g)$ over ${K}$  is an associative superalgebra with identity generated by the homogeneous
elements $\qe_i,\qf_i, k_i^{\pm1}$ ($0\le i \le n$), where $\qe_s,\qf_s,(s\in\tau)$, are odd and the other generators are even, with the following defining relations:
\begin{eqnarray}
\nonumber
&& k_i  k_i^{-1}= k_i^{-1}  k_i=1,\quad  k_i  k_j= k_j  k_i,\\
\nonumber
&&     k_i \qe_j  k_i^{-1} = q_i^{a_{ij}}  \qe_j,
\quad  k_i \qf_j   k_i^{-1} = q_i^{-a_{ij}} \qf_j,\\
\label{eq:xx-q}
&&\qe_i\qf_j     -    (-1)^{ [\qe_i][\qf_j] } \qf_j\qe_i
                   =\de_{ij}  \dfrac{  k_i- k_i^{-1} }
                                          { q_i-q_i^{-1} }, \quad \forall i, j, \\
\nonumber
&&\left(
            \mbox{Ad}_{\qe_i}    \right)^{1-a_{ij}}    (\qe_j)
    =\left(
             \mbox{Ad}_{\qf_i}    \right)^{1-a_{ij}}    (\qf_j)=0, \quad       \text{ if } i\neq j.
\end{eqnarray}
\end{defi}

Here $\mbox{Ad}_{\qe_i}(x)$ and $\mbox{Ad}_{\qf_i}(x)$ are respectively defined by
\begin{equation}\label{eq:ad}
\begin{aligned}
\mbox{Ad}_{\qe_i}(x)     =         \qe_ix   -(-1)^{[\qe_i][x]}   k_i x  k_i^{-1} \qe_i,\\
\mbox{Ad}_{\qf_i}(x)      =         \qf_ix    -(-1)^{[\qf_i][x]}    k_i^{-1} x  k_i \qf_i,
\end{aligned}
\end{equation}
For any $x, y\in \Uq(\g)$ and $a\in{K}$, we shall write
\[
[x, y]_a=x y - (-1)^{[x][y]} a y x, \quad [x, y] = [x, y]_1.
\]
Then $\mbox{Ad}_{\qe_i}(\qe_j)=[\qe_i, \qe_j]_{q_i^{a_{ij}}}$ and
$\mbox{Ad}_{\qf_i}(\qf_j)=[\qf_i, \qf_j]_{q_i^{a_{ij}}}$.

To construct the Drinfeld realisation for  the quantum affine superalgebra $\U_q(\g)$, we let $\cI=\{ (i,r)\mid 1\le i\le n,  \ r\in\Z \}$.  Define the set $\cI_\g$ by
$\cI_{\g}:=\cI$
if $\g=\osp(1|2n)^{(1)}$ or $\Sl(1|2n)^{(2)}$; and
$\cI_{\g}:=\cI\backslash \{ (i,2r+1)\mid 1\le i<n, \  r\in \Z\}$
if  $\g=\osp(2|2n)^{(2)}$.
Let $ \mathcal{I}_{\g}^* =\{(i, s)\in \mathcal{I}_{\g}\mid s\ne 0\}$.
Also, for any expression $f(x_{r_1},\dots,x_{r_k})$ in $x_{r_1},\dots,x_{r_k}$, we use $sym_{r_1,\dots,r_k}f(x_{r_1},\dots,x_{r_k})$ to denote $\sum_{\sigma}f(x_{\sigma(r_1)},\dots,x_{\sigma(r_k)})$, where the sum is over the permutation group of the set  $\{r_1, r_2, \dots, r_k\}$.

\begin{defi}\label{def:DR}
For $\g=\osp(1|2n)^{(1)}$, $\Sl(1|2n)^{(2)}$ or $\osp(2|2n)^{(2)}$,  we let $\Dr_q(\g)$ be the associative superalgebra over ${K}$ with identity,  generated by
\[
\ef_{i,r},\ \qk_i^{\pm1}, \  \h_{i,s}, \ \ga^{\pm 1/2}, \quad \text{for }\  (i,r)\in\mathcal{I}_{\g},   \   (i,s)\in\mathcal{I}_{\g}^*, \  1\le i\le n,
\]
where $\e_{n,r},\f_{n,r}$ are odd and the other generators are even, with the following defining relations.
\begin{itemize}
\item[\rm(1)]  $\ga^{\pm 1/2}$ are central, and $\ga^{1/2} \ga^{- 1/2}=1$,
\begin{align}
&\qk_i\qk_i^{-1}=\qk_i^{-1}\qk_i=1,\quad  \qk_i\qk_j=\qk_j\qk_i, \nonumber\\
\label{eq:hx}
& \qk_i \ef_{j,r} \qk_i^{-1}=q_i^{\pm a_{ij}} \ef_{j,r},\quad
 [\h_{i,r},\ef_{j,s}]  = \dfrac{  u_{i,j,r} \ga^{\mp|r|/2}  }
                                                   {   r(q_i-q_i^{-1})  }
                                                    \ef_{j,s+r},  \\
  \label{eq:hh}
&[\h_{i,r},\h_{j,s}]=\delta_{r+s,0} \dfrac{ u_{i,j,r} (\ga^{r}-\ga^{-r})  }
                                                           { r   (q_i-q_i^{-1})(q_j-q_j^{-1})  }  ,\\
\label{eq:xx}
& [\e_{i,r}, \f_{j,s}]  =\delta_{i,j}
                                \dfrac{   \ga^{\frac{r-s}{2}} \hh^{+}_{i,r+s}
                                          -  \ga^{\frac{s-r}{2}} \hh^{-}_{i,r+s}   }
                                         {  q_i-q_i^{-1}  },
\end{align}
where the
$u_{i,j,r}$ are given in \eqref{eq:u-def};  and $\hh^{\pm}_{i,\pm r}$ are defined by
\begin{equation}\label{eq:hh-hat}
\begin{aligned}
&\sum_{r\in\Z}  \hh^{+}_{i,r}u^{-r} =\qk_i \xp  \left(
                                  (q_i-q_i^{-1})\sum_{r>0}\h_{i, r}u^{-r}
                                                               \right),\\
&\sum_{r\in\Z}  \hh^{-}_{i,-r}u^r    =\qk_i^{-1}  \xp  \left(
                                 (q_i^{-1}-q_i)\sum_{r>0}\h_{i,-r}u^r
                                                                \right);
\end{aligned}
\end{equation}

\item[\rm(2)] {\rm Serre relations}
\begin{itemize}
\item[\rm (A)] {\rm For}  $(\g,i,j)\neq (\osp(1|2n)^{(1)}, n, n)$,
 \begin{align}
 [\ef_{i,r\pm \theta}, \ef_{j,s}]_{q^{a_{ij}}_{i}}
     +[\ef_{j,s\pm \theta}, \ef_{i,r}]_{q^{a_{ji}}_{j}}=0, \label{eq:xrs-xsr}
 \end{align}
where $\theta=2$ if $\g=\osp(2|2n)^{(2)}, (i,j)\ne (n,n)$,  and  $1$ otherwise;
\item[\rm (B)] $n\ne i\neq j$, \ $\ell=1-a_{i j}$,
\[\begin{aligned}
\hspace{20mm}
sym_{r_1,\dots,r_\ell}\sum_{k=0}^\ell  (-1)^k
                                             \begin{bmatrix} \ell\\k\end{bmatrix}_{q_i}
                                              \ef_{i,r_1}\dots\ef_{i,r_k} \ef_{j,s}\ef_{i,r_k+1}\dots\ef_{i,r_\ell}=0;
\end{aligned}\]

\item[\rm (C)]  $j<n-1$, \ $\ell=1-a_{n j}$, and in case $\g\ne \Sl(1|2n)^{(2)}$, 
\[\begin{aligned}
\hspace{20mm}
sym_{r_1,\dots,r_\ell}\sum_{k=0}^\ell
                                              \begin{bmatrix} \ell\\k\end{bmatrix}_{\sqrt{-1} q_n}
                                              \ef_{n,r_1}\dots\ef_{n,r_k} \ef_{j,s}\ef_{n,r_k+1}\dots\ef_{n,r_\ell}=0;
\end{aligned}\]

\item[\rm (D)] {\rm For} $\g=\osp(1|2n)^{(1)}$,
 \begin{align*}
&sym_{r_1,r_2,r_3} [  [\ef_{n,r_1\pm 1},\ef_{n,r_2}]_{q_n^2},  \ef_{n,r_3}]_{q_n^4}=0;\\
&sym_{r,s}\Big([\ef_{n,r\pm 2}, \ef_{n,s}]_{q_n^2} -q_n^4 [\ef_{n,r\pm 1},  \ef_{n,s\pm 1} ]_{q_n^{-6}}\Big)=0;\\
&sym_{r,s}\Big(q_n^2[[\ef_{n,r\pm1},\ef_{n,s}]_{q_n^2},\ef_{n-1,k}]_{q_n^4}\\
&\qquad\ \  +(q_n^2+q_n^{-2})[[\ef_{n-1,k},\ef_{n,r\pm1}]_{q_n^2},\ef_{n,s}]\Big)=0;
\end{align*}

\item[\rm (E)] {\rm For} $\g=\osp(2|2n)^{(2)}$,
\[
sym_{r, s} [  [\ef_{n-1 ,k},\ef_{n, r\pm 1}]_{q_n^2},  \ef_{n, s}]=0.
\]
\end{itemize}
\end{itemize}
\end{defi}
In the above, the scalars $u_{i,j,r}$ ($r\in\Z$, $i, j=1, 2, \dots, n$)  are defined by
\begin{eqnarray}\label{eq:u-def}
\begin{aligned}
&\osp(1|2n)^{(1)}: \quad u_{i,j,r}=\begin{cases}
 q_n^{4r}-q_n^{-4r}-q_n^{2r}+q_n^{-2r},   & \text{if } i=j=n,\\
 q_i^{r a_{ij}}- q_i^{-r a_{ij}},                             & \text {otherwise };
            \end{cases}\\
&\osp(2|2n)^{(2)}: \quad u_{i,j,r}=\begin{cases}
  (-1)^r(q_n^{2r}-q_n^{-2r}),                 & \text{if }  i=j=n,  \\
(1+(-1)^r)(q_i^{r a_{ij}/2}-q_i^{-r a_{ij}/2}),        & \text {otherwise };
                  \end{cases}\\
&\Sl(1|2n)^{(2)}: \quad \phantom{X}  u_{i,j,r}=\begin{cases}
(-1)^r(q_n^{2r}-q_n^{-2r}),                   & \text{if }  i=j=n,  \\
q_i^{r a_{ij}}- q_i^{-r a_{ij}},        & \text {otherwise }.
                 \end{cases}
\end{aligned}
\end{eqnarray}

\subsection{The main theorem}
The following theorem is one of the main results of this paper; its proof 
will be given in Section \ref{sec:DR-osp1}.
\begin{theo}\label{them:DR-iso}
Let $\g=\osp(1|2n)^{(1)}$, $\Sl(1|2n)^{(2)}$ or $\osp(2|2n)^{(2)}$.
There exists a superalgebra isomorphism
$
\Psi: \Uq(\g)\stackrel{\sim}\longrightarrow \Dr_q(\g)
$
such that
\begin{align*}
\text {for } \g=&\osp(1|2n)^{(1)}:\\
&\qe_i \mapsto \e_{i,0}, \quad \qf_i\mapsto \f_{i,0}, \quad k_i \mapsto \qk_i, \quad  k^{-}_i\mapsto \qk^{-}_i,\quad \text {for } 1\le i\le n,\\
& \qe_0\mapsto\ad_{\f_{1,0}} \dots \ad_{\f_{n,0}}\ad_{\f_{n,0}}\ad_{\f_{n-1,0}} \dots \ad_{\f_{2,0}}(\f_{1,1}) \ga \qk^{-1}_{\g},\\
& \qf_0\mapsto c_{\g}\ga^{-1}\qk_{\g}\ad_{\e_{1,0}} \dots \ad_{\e_{n,0}}\ad_{\e_{n-1,0}} \dots \ad_{\e_{2,0}} (\e_{1,-1}),\quad  k_0\mapsto\ga \qk_{\g}^{-1},\\
\text {for } \g=& \Sl(1|2n)^{(2)}:\\
&\qe_i \mapsto \e_{i,0}, \quad \qf_i\mapsto \f_{i,0}, \quad k_i \mapsto \qk_i, \quad  k^{-}_i\mapsto \qk^{-}_i,\quad \text {for } 1\le i\le n,\\
& \qe_0\mapsto\ad_{\f_{2,0}} \dots \ad_{\f_{n,0}}\ad_{\f_{n,0}}\ad_{\f_{n-1,0}} \dots \ad_{\f_{2,0}}(\f_{1,1}) \ga \qk^{-1}_{\g},\\
& \qf_0\mapsto c_{\g}\ga^{-1}\qk_{\g}\ad_{\e_{2,0}} \dots \ad_{\e_{n,0}}\ad_{\e_{n-1,0}} \dots \ad_{\e_{2,0}} (\e_{1,-1}),\quad  k_0\mapsto\ga \qk_{\g}^{-1},\\
\text {for } \g=&\osp(2|2n)^{(2)}:\\
&\qe_i \mapsto \e_{i,0}, \quad \qf_i\mapsto \f_{i,0}, \quad k_i \mapsto \qk_i, \quad  k^{-}_i\mapsto \qk^{-}_i,\quad \text {for } 1\le i\le n,\\
&\qe_0\mapsto\ad_{\f_{1,0}} \dots \ad_{\f_{n-1,0}}(\f_{n,1}) \ga \qk^{-1}_{\g},\\
&\qf_0\mapsto c_{\g}\ga^{-1}\qk_{\g}\ad_{\e_{1,0}} \dots \ad_{\e_{n-1,0}}(\e_{n,-1}),
\quad k_0\mapsto\ga \qk_{\g}^{-1},
\end{align*}
where $\qk_{\g}$ is defined by
\begin{align*}
&\qk_{\g}=\begin{cases}
\qk_1^2\qk_2^2\dots \qk_{n}^2, & \g=\osp(1|2n)^{(1)},\\
\qk_1\qk_2^2\dots \qk_{n}^2,     &\g=\Sl(1|2n)^{(2)},\\
\qk_1\qk_2\dots \qk_n,                & \g=\osp(2|2n)^{(2)},
\end{cases}
\end{align*}
and $c_{\g}\in{K}$  is determined by \eqref{eq:xx-q}.
\end{theo}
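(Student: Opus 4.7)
The plan is to deduce the theorem from Drinfeld's classical theorem for ordinary quantum affine algebras by passing through the quantum correspondences recalled in Lemma \ref{lem:correspon}. For each affine Lie superalgebra $\g$ in \eqref{eq:g}, that lemma produces an ordinary affine Kac-Moody algebra $\bar\g$ and a Hopf-algebra isomorphism $\Uq(\g)\cong\U_\epsilon(\bar\g)$ for a suitable specialisation $\epsilon\in\{q,-q\}$ of the deformation parameter (for instance, $\bar\g=B_n^{(1)}$ and $\epsilon=-q$ when $\g=\osp(1|2n)^{(1)}$). This reduces the Drinfeld--Jimbo side of the desired isomorphism to an ordinary quantum affine algebra.

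I would then carry out the parallel reduction on the Drinfeld-realisation side, constructing an associative-algebra isomorphism $\Dr_q(\g)\cong\Dr_\epsilon(\bar\g)$ directly from the defining relations. Once the $\Z_2$-grading at node $n$ is absorbed into the sign conventions of the correspondence, the scalars $u_{i,j,r}$ of \eqref{eq:u-def} and each of the Serre relations (A)--(E) match, term by term, the corresponding data for $\bar\g$; in particular the $(-1)^r$ factors in the twisted cases and the $\sqrt{-1}$ appearing in (C) are exactly the signs produced by trading an odd generator for an ordinary one at $\epsilon=\pm q$. Drinfeld's theorem \cite{Dr}, in the explicit form of \cite{Be,Jn2,JZ}, then supplies the middle isomorphism $\U_\epsilon(\bar\g)\cong\Dr_\epsilon(\bar\g)$, and composing the three maps defines $\Psi$. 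A direct check, using that $\qk_\g$ corresponds to the $\bar\g$-weight of the affine simple root $\alpha_0$ under the root-system identification, confirms that the images of $\qe_0$, $\qf_0$ and $k_0$ are precisely the iterated $\ad$-brackets displayed in the theorem, with the central element $\ga$ arising from the action on $\qe_0,\qf_0$ in $\Dr_\epsilon(\bar\g)$ and the constant $c_\g$ fixed by \eqref{eq:xx-q}.

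The most delicate step will be the second one for $\g=\osp(1|2n)^{(1)}$ at the distinguished node $n$: showing that the cubic and quartic relations in (D), together with the non-standard scalars $u_{n,n,r}=q_n^{4r}-q_n^{-4r}-q_n^{2r}+q_n^{-2r}$, correctly encode the Drinfeld--Serre relations at the short root of $B_n^{(1)}$ under the twist to $\U_{-q}(B_n^{(1)})$. This requires a careful derivation of the underlying $q$-bracket identities in $\Dr_{-q}(B_n^{(1)})$ and a verification that they survive the sign twist without distortion. The analogous check for relations (E) in the $\osp(2|2n)^{(2)}$ case, and for the mixed relation (C) involving the factor $\sqrt{-1} q_n$, is lighter but of the same nature. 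Once this combinatorial bookkeeping is in place, bijectivity of $\Psi$ is inherited from Drinfeld's theorem and the two correspondences, so the rest of the argument is formal.
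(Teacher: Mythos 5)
Your overall strategy is the one the paper actually uses: reduce both sides to an ordinary quantum affine algebra via the quantum correspondences of Lemma \ref{lem:correspon}, invoke Drinfeld's theorem in the middle, and separately construct the matching isomorphism of Drinfeld realisations. However, there are two concrete problems. First, your illustrative example of the correspondence is wrong: by Table \ref{tbl:g} the partner of $\osp(1|2n)^{(1)}$ is the \emph{twisted} algebra $A_{2n}^{(2)}$, not $B_n^{(1)}$ (the latter corresponds to $\Sl(1|2n)^{(2)}$). This is not cosmetic -- the delicate node-$n$ check you single out, matching $u_{n,n,r}=q_n^{4r}-q_n^{-4r}-q_n^{2r}+q_n^{-2r}$ and the Serre relations (D) against the ordinary side, has to be done against the $A_{2n}^{(2)}$ data in \eqref{eq:u'} and the relations (C) of $\Dr_t(\g')$, and it would fail against $B_n^{(1)}$, whose Cartan matrix at the end node is different.

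Second, and more substantively, the correspondence is \emph{not} an isomorphism $\Uq(\g)\cong\U_\epsilon(\bar\g)$ of the algebras themselves: one side is a superalgebra with odd generators and the other is purely even, so no such associative-algebra isomorphism exists. The paper's mechanism is to adjoin the Klein group $\mathrm{G}\cong(\Z_2)^n$ generated by the $\sigma_i$ and form the smash products $\UU_q(\g)=\Uq(\g)\sharp K\mathrm{G}$ and $\D_q(\g)=\Dr_q(\g)\sharp K\mathrm{G}$; the isomorphisms of Lemma \ref{lem:correspon} and Theorem \ref{lem:iso} only exist at that level, with generators mapping to expressions such as $\bigl(\prod_k\sigma'_k\bigr)e'_i$ that live outside the original algebras. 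Your phrase about the $\Z_2$-grading being ``absorbed into the sign conventions'' gestures at this but does not supply the device, and without it the two reductions you propose are not available as stated. The extension also forces an extra final step that your plan omits: after composing $\psi$, $\rho$ and (the inverse of) $\varphi$ one must verify that the resulting isomorphism of smash products carries $\Uq(\g)\otimes 1$ onto $\Dr_q(\g)\otimes 1$, so that it descends to the map $\Psi$ claimed in the theorem. With the correct table and the smash-product scaffolding inserted, the rest of your outline (term-by-term matching of the $u_{i,j,r}$ and Serre relations, and the identification of the images of $\qe_0,\qf_0,k_0$) is exactly the computation carried out in Section \ref{sec:DR}.
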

\begin{rem}\label{rem:hopf-iso}
We can transcribe the Hopf superalgebra structure of $\Uq(\g)$ to
$\Dr_q(\g)$ using $\Psi$. For example, if $\Delta$ is the 
co-multiplication of $\Uq(\g)$, the co-multiplication of $\Dr_q(\g)$ is 
given by $(\Psi\otimes\Psi)\circ\Delta\circ\Psi^{-1}$. 
Then clearly $\Psi$ is an isomorphism of Hopf superalgebras.
\end{rem}

\subsection{ Drinfeld realisations in terms of
	currents}\label{sec:DR-C}

For the purpose of studying vertex operator representations, it is more convenient to present the Drinfeld realisation in terms of
currents.  For this, we will need the calculus of formal distributions familiar in the theory of vertex operators algebras.  Particularly useful is the formal distribution
$\delta(z)=\sum_{r\in\Z}z^r$, which has the following property: for any formal distribution $f(z,w)$ in the two variables $z$ and $w$, we have
$f(z,w)\delta(\frac{w}{z})=f(z,z)\delta(\frac{w}{z})$. A detailed treatment of formal distributions can be found in, e.g., \cite{K98}.

Given any pair of simple roots $\alpha_i$ and $\alpha_j$ of $\g$, we let
\begin{align}\label{eq-g}
g_{ij}(z)=\sum_{n\ge 0} g_{ij,n}z^n,
\end{align}
be the Taylor series expansion at $z=0$ of $f_{ij}(z)/h_{ij}(z)$,
where
\begin{eqnarray*}
	&\osp(1|2n)^{(1)}: &f_{ij}(z)=\begin{cases}
		(q^{2(\alpha_i,\alpha_j)}z-1)(q^{-(\alpha_i,\alpha_j)}z-1), & i=j=n,\\
		q^{(\alpha_i,\alpha_j)}z-1, & otherwise;
	\end{cases}\\
	&                           &h_{ij}(z)=\begin{cases}
		(z-q^{2(\alpha_i,\alpha_j)})(z-q^{-(\alpha_i,\alpha_j)}),&i=j=n,\\
		z-q^{(\alpha_i,\alpha_j)},& otherwise;
	\end{cases}\\
	&\osp(2|2n)^{(2)}:&f_{ij}(z)=\begin{cases}
		(-q)^{(\alpha_i,\alpha_j)}z-1, & i=j=n,\\
		\left(q^{(\alpha_i,\alpha_j)/2}z-1\right)\left ((-q)^{(\alpha_i,\alpha_j)/2}z-1\right), & otherwise;
	\end{cases}\\
	&                           &h_{ij}(z)=\begin{cases}
		z-(-q)^{(\alpha_i,\alpha_j)},&i=j=n,\\
		\left(z-q^{(\alpha_i,\alpha_j)/2}\right)\left(z-(-q)^{(\alpha_i,\alpha_j)/2}\right),& otherwise;
	\end{cases}\\
	&\Sl(1|2n)^{(2)}: &f_{ij}(z)=\begin{cases}
		(-q)^{(\alpha_i,\alpha_j)}z-1, & i=j=n,\\
		q^{(\alpha_i,\alpha_j)}z-1, & otherwise;
	\end{cases}\\
	&                           &h_{ij}(z)=\begin{cases}
		z-(-q)^{(\alpha_i,\alpha_j)},&i=j=n,\\
		z-q^{(\alpha_i,\alpha_j)},& otherwise.
	\end{cases}
\end{eqnarray*}

Now we introduce the following formal distributions in $\Uq(\g)[[z^{1/2}, z^{-1/2}]]$ for $\g=\osp(1|2n)^{(1)}$ and $\Uq(\g)[[z, z^{-1}]]$ for $\g=\Sl(1|2n)^{(2)},\osp(2|2n)^{(2)}$,
\begin{align*}
&\e_i(z)=\begin{cases}
\sum_{r\in\Z}\e_{i,r}z^{-r+1/2}, & \g=\osp(1|2n)^{(1)},i=n;\\
\sum_{r\in\Z}\e_{i,r}z^{-r},& \text{otherwise},
\end{cases}\\
&\f_i(z)=\begin{cases}
\sum_{r\in\Z}\f_{i,r}z^{-r-1/2},& \g=\osp(1|2n)^{(1)},i=n;\\
\sum_{r\in\Z}\f_{i,r}z^{-r},& \text{otherwise},
\end{cases}
\\
&\psi_i(z)=\sum_{r\in\Z_{\ge 0}}\hh^{+}_{i,r}z^{-r},\quad \varphi_i(z)=\sum_{r\in\Z_{\le 0}}\hh^{-}_{i,r}z^{-r}.
\end{align*}

\begin{lem}\label{lem:dr-f} Let
	$\g=\osp(1|2n)^{(1)}$, $\Sl(1|2n)^{(2)}$ or $\osp(2|2n)^{(2)}$. Then $\Uq(\g)$ has the following presentation. The generators are
	\[
	\ef_{i,r},\hh_{i,r}^{\pm}, \ \ga^{\pm 1/2},
	\quad \text{for }\  (i,r)\in\mathcal{I}_{\g};
	\]
	the relations in terms of formal distributions are given by:
	\begin{item}
		\item[\rm (1)]  $\ga^{\pm 1/2}$ are central with $\ga^{1/2} \ga^{- 1/2}=1$,
		\begin{align}
		&\hh^{+}_{i,0}\hh^{-}_{i,0}= \hh^{-}_{i,0}\hh^{+}_{i,0}=1,
		[\varphi_i(z), \psi_j(w)]=[\psi_j(w),\varphi_i(z)]=0,\\
		& \varphi_i(z) \psi_j(w) \varphi_i(z)^{-1} \psi_j(w)^{-1}=g_{ij}(zw^{-1}\ga^{-1})/g_{ij}(zw^{-1}\ga),\\
		& \varphi_i(z) \ef_j(w) \varphi_i(z)^{-1}=g_{ij}(zw^{-1}\ga^{\mp1/2})^{\pm1}\ef_j(w),\\
		&\psi_i(z) \ef_j(w) \psi_i(z)^{-1}=g_{ij}(z^{-1}w\ga^{\mp1/2})^{\mp1}\ef_j(w),\\
		&[\e_i(z), \f_j(w)]=
		\frac{\rho_{z,w}\delta_{i,j}}{q_i-q_i^{-1}}
		\left (\psi_i(z\ga^{-1/2}) \delta\left( \frac{z\ga^{-1}}{w}\right)
		-\varphi_i(z\ga^{1/2}) \delta\left(\frac{z\ga}{w}\right) \right),\label{eq:xx-f}
		\end{align}
		where $g_{ij}$ are defined by \eqref{eq-g}, and
		$\rho_{z,w}=(z/w)^{1/2}$ if $\g=\osp(1|2n)^{(1)}$ and $i=n$,  and $\rho_{z,w}=1$ otherwise.
		
		\item[\rm (2)] Serre relations
		
		\begin{itemize}
			\item[\rm (A)] $(i,j)\neq (n, n)$, and if $\g\neq \osp(1|2n)^{(1)}$,
			\begin{align}
			\label{eq:xrs-xsr-f}
			&[z^{\pm\theta}\ef_{i}(z),\ef_{j}(w)]_{q_{i}^{a_{ij}}}+[w^{\pm\theta}\ef_{j}(w),\ef_{i}(z)]_{q_{j}^{a_{ji}}}=0,
			\end{align}
			where $\theta=2$ if $\g=\osp(2|2n)^{(2)}$,  and $1$ if $\g=\Sl(1|2n)^{(2)}$;
			
			\item[\rm (B)] $n\ne i\neq j$, \ $\ell=1-a_{i j}$,
			\[\begin{aligned}
			\hspace{10mm}
			sym_{z_1,\dots,z_\ell}\sum_{k=0}^\ell  (-1)^k
			\begin{bmatrix} \ell\\k\end{bmatrix}_{q_i}
			\ef_{i}(z_1)\dots\ef_{i}(z_k) \ef_{j}(w)\ef_{i}(z_{k+1})\dots\ef_{i}(z_{\ell})=0;
			\end{aligned}\]
			
			\item[\rm (C)]  $n=i\ne j$, \ $\ell=1-a_{i j}$, and if $\g\ne \Sl(1|2n)^{(2)}$, $j<n-1$,
			\[\begin{aligned}
			\hspace{10mm}
			sym_{z_1,\dots,z_\ell}\sum_{k=0}^\ell
			\begin{bmatrix} \ell\\k\end{bmatrix}_{\tilde{q_i}}
			\ef_{i}(z_1)\dots\ef_{i}(z_k) \ef_{j}(w)\ef_{i}(z_{k+1})\dots\ef_{i}(z_\ell)=0,
			\end{aligned}\]
			where $\tilde{q_i}=(-1)^{1/2}q_i$;
			
			\item[\rm (D)] for $\g=\osp(1|2n)^{(1)}$,
			\begin{align*}
			&sym_{z_1,z_2,z_3} \left[  [z_1^{\pm}\ef_{n}(z_1),\ef_{n}(z_2)]_{q_n^2},  \ef_{n}(z_3)\right]_{q_n^4}=0;\\
			&sym_{z,w}\Big([z^{\pm 2}\ef_{n}(z), \ef_{n}(w)]_{q_n^2} -q_n^4 [z^{\pm}\ef_{n}(z),  w^{\pm 1}\ef_{n}(w) ]_{q_n^{-6}}\Big)=0;\\
			&sym_{z_1,z_2}\Big(q_n^2\left[[z_1^{\pm}\ef_{n}(z_1),\ef_{n}(z_2)]_{q_n^2},\ef_{n-1}(w)\right]_{q_n^4}\\
			&+(q_n^2+q_n^{-2})\left[[\ef_{n-1}(w),z_1^{\pm}\ef_{n}(z_1)]_{q_n^2},\ef_{n}(z_2)\right]\Big)=0;
			\end{align*}
			
			\item[\rm (E)] for $\g=\osp(2|2n)^{(2)}$,
			\[
			sym_{z_1,z_2} \left[  [\ef_{n-1}(w),z_1^{\pm}\ef_{n}(z_1)]_{q_n^2},  \ef_{n}(z_2)\right]=0.
			\]
		\end{itemize}
	\end{item}
\end{lem}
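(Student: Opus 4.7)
The plan is to exploit Theorem \ref{them:DR-iso}, which identifies $\Uq(\g)$ with $\Dr_q(\g)$, and then to show that the formal-distribution relations of the lemma are a coefficient-wise repackaging of the mode relations in Definition \ref{def:DR}. Since \eqref{eq:hh-hat} expresses $\hh^{\pm}_{i,r}$ as polynomials in $\qk_i^{\pm 1}$ and the $\h_{i,\pm s}$ (and the $\h_{i,\pm s}$ are recoverable by taking logarithms), the two generating sets carry the same information, so only the relations need to be matched.

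First I would handle the Cartan-current identities. The commutator \eqref{eq:hh} is central in $\Dr_q(\g)$, so a Baker--Campbell--Hausdorff computation applied to the defining exponentials of $\psi_j(w)$ and $\varphi_i(z)$ shows that $\varphi_i(z)\psi_j(w)\varphi_i(z)^{-1}\psi_j(w)^{-1}$ is a scalar formal series, which after collecting powers becomes $g_{ij}(zw^{-1}\ga^{-1})/g_{ij}(zw^{-1}\ga)$. The same BCH device, applied via \eqref{eq:hx} to the ad-action of $\h_{i,r}$ on $\ef_j(w)$ (which scales $\ef_j(w)$ by a power of $w/z$), yields the displayed $\psi_i(z)\ef_j(w)\psi_i(z)^{-1}$ and $\varphi_i(z)\ef_j(w)\varphi_i(z)^{-1}$ relations. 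Each of these reductions rests on the generating-function identity
\[
-\log g_{ij}(v) \;=\; \log q_i^{a_{ij}} \;+\; \sum_{r>0}\frac{u_{i,j,r}}{r}\,v^{r},
\]
which I would verify case by case from \eqref{eq-g} and \eqref{eq:u-def}.

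For the $[\e_i(z),\f_j(w)]$ relation, multiplying \eqref{eq:xx} by $z^{-r}w^{-s}$ and summing over $r, s$ telescopes via $\sum_{r\in\Z}(z\ga^{\mp 1}/w)^{r}=\delta(z\ga^{\mp 1}/w)$ and delivers precisely \eqref{eq:xx-f}; the prefactor $\rho_{z,w}=(z/w)^{1/2}$ in the $\g=\osp(1|2n)^{(1)}$, $i=n$ case arises directly from the half-integer exponents built into the definition of $\e_n(z)$ and $\f_n(z)$. For the Serre current relations (A)--(E), extracting the coefficients of the monomials $z_1^{-r_1}\cdots z_\ell^{-r_\ell}w^{-s}$ on both sides recovers the mode Serre relations of Definition \ref{def:DR}, with the factors $z_i^{\pm\theta}$ converting to the index shifts $r_i\pm\theta$. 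Because this translation between mode and current form is invertible, no new relations are created and none are lost.

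The main obstacle is verifying the logarithmic identity for $g_{ij}$ in the diagonal case $i=j=n$: for $\g=\osp(1|2n)^{(1)}$ the function $f_{nn}(z)/h_{nn}(z)$ is a ratio of two pairs of binomial factors whose logarithmic expansion must conspire to produce the four-term expression $q_n^{4r}-q_n^{-4r}-q_n^{2r}+q_n^{-2r}$ of \eqref{eq:u-def}, while for the twisted algebras the alternating signs $(-1)^r$ in $u_{n,n,r}$ must emerge from the factor $(-q)^{(\alpha_n,\alpha_n)}$ appearing in $f_{nn}$ and $h_{nn}$. In each of these cases the matching is a direct computation using the Taylor expansion $\log(1-\lambda v)=-\sum_{r>0}\lambda^r v^r/r$, after which the remaining steps reduce to routine applications of the formal distribution calculus.
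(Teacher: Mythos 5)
Your proposal is correct and follows essentially the same route as the paper: the authors also prove the lemma by identifying $\Uq(\g)$ with $\Dr_q(\g)$ via Theorem \ref{them:DR-iso} and then translating between modes and currents by coefficient extraction, working out only \eqref{eq:xx-f} explicitly via the formal delta function and declaring the rest a "straightforward computation." Your write-up in fact supplies more of the omitted substance — in particular the identity $-\log g_{ij}(v)=\log q_i^{a_{ij}}+\sum_{r>0}u_{i,j,r}v^r/r$ underlying the $\varphi,\psi$ conjugation relations, which checks out against \eqref{eq-g} and \eqref{eq:u-def} in all cases including $i=j=n$.
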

\begin{proof}
	This can be proven by straightforward computation, thus we will not give the details. Instead, we consider only \eqref{eq:xx-f} as an example. The relations \eqref{eq:xx} and \eqref{eq:hh-hat} lead to
	\begin{align*}
	&[\e_i(z), \f_j(w)]
	=\rho_{z,w}\delta_{i,j}\sum_{r,s}   \dfrac {   \ga^{\frac{r-s}{2}}  \hh^{+}_{i,r+s}
		-\ga^{\frac{s-r}{2}}  \hh^{-}_{i,r+s}     }
	{  q_i -q_i ^{-1}  }
	z^{-r}w^{-s}\\
	&=\frac{\rho_{z,w}\delta_{i,j}}{q_i-q_i^{-1}}\left\{
	\sum_{r}\hh^{+}_{i,r}       (z\ga^{-1/2})^{-r}    \delta\left(\frac{z\ga^{-1}}{w}\right)
	-\sum_{r}\hh^{-}_{i,r}  (z\ga^{1/2})^{-r}    \delta\left(\frac{z\ga}{w}\right)
	\right\}\\
	&=\frac{\rho_{z,w}\delta_{i,j}}{q_i-q_i^{-1}}\left\{
	k_i  \xp\left(
	(q_i -q_i^{-1}) \sum_{r=1}^{\infty}\h_{i,r}(z\ga^{-1/2})^{-r}
	\right)
	\delta \left( \frac{z\ga^{-1}}{w} \right) \right.\\
	&\hspace{21mm} \left.
	-k_i^{-1} \xp \left(
	(q_i^{-1}-q_i) \sum_{r=1}^{\infty}\h_{i,-r}(z\ga^{1/2})^{r}
	\right)
	\delta\left(\frac{z\ga}{w}\right)          \right\}.
	\end{align*}
	Using the definitions of $\psi_i(z)$ and $\varphi_i(z)$ on the right hand side, we immediately obtain \eqref{eq:xx-f}. In the opposite direction, we easily obtain
	\eqref{eq:xx} and \eqref{eq:hh-hat} by comparing the coefficients
	of $z^{-r}w^{-s}$ in \eqref{eq:xx-f}.
\end{proof}

%=============================================================%
\section{Quantum correspondences for Drinfeld realisations}\label{sec:DR}
We prove Theorem \ref{them:DR-iso} in this section. For convenience, We choose the normalisation for the bilinear form such that
$(\alpha_n,\alpha_n)=1.$ Consider the automorphism which leaves the other generators intact but maps $\h_{i,s}\mapsto [(\alpha_i,\alpha_i)/2]_q\h_{i, s}$ and $\e _{i,s}\mapsto [(\alpha_i,\alpha_i)/2]_q \e_{i, s}$ for all $i$.
It transforms the relations \eqref{eq:xx-q}, \eqref{eq:hx}-\eqref{eq:hh} into the following form
	\begin{eqnarray*}
		\begin{aligned}
			&\qe_i\qf_j     -    (-1)^{ [\qe_i][\qf_j] } \qf_j\qe_i
			=\de_{ij}  \dfrac{  k_i- k_i^{-1} }
			{ q-q^{-1} }, \quad \forall i, j, \\
			& [\h_{i,r},\ef_{j,s}]  = \dfrac{  u_{i,j,r} \ga^{\mp|r|/2}  }
			{   r(q-q^{-1})  }
			\ef_{j,s+r},  \\
			&[\h_{i,r},\h_{j,s}]=\delta_{r+s,0} \dfrac{ u_{i,j,r} (\ga^{r}-\ga^{-r})    }
			{  r   (q-q^{-1})(q-q^{-1})  }           ,\\
			& [\e_{i,r}, \f_{j,s}]  =\delta_{i,j}
			\dfrac{   \ga^{\frac{r-s}{2}}  \hh^{+}_{i,r+s}
				-  \ga^{\frac{s-r}{2}} \hh^{-}_{i,r+s}   }
			{  q-q^{-1}  }, \\
			&\sum_{r\in\Z}  \hh^{+}_{i,r}u^{-r} =\qk_i \xp  \left(
			(q-q^{-1})\sum_{r>0}\h_{i, r}u^{-r}
			\right),\\
			&\sum_{r\in\Z}  \hh^{-}_{i,-r}u^r    =\qk_i^{-1}  \xp  \left(
			(q^{-1}-q)\sum_{r>0}\h_{i,-r}u^r
			\right),
		\end{aligned}
	\end{eqnarray*}
In this section, we will use this form instead of the standard expressions in Definition \ref{defi:quantum-super} and \ref{def:DR}. A consequence is that $q^{\pm 1/2}$ never appears in these relations (see more details in \cite{XZ}).

\subsection{Smash products}\label{sect:smash-prod}
Let $\g$ be any of the affine Lie superalgebras
in the first row of Table \ref{tbl:g} (which is \eqref{eq:g}),

\begin{table}[h]
\caption{Table 1} \label{tbl:g}
\renewcommand{\arraystretch}{1.2}
\vspace{-2mm}
\begin{tabular}{c|c|c|c}
\hline
$\g$ & $\osp(1|2n)^{(1)}$    & $\Sl(1|2n)^{(2)}$ & $\osp(2|2n)^{(2)}$  \\
\hline
$\g'$ & $A_{2n}^{(2)}$   & $B_n^{(1)}$   & $D_{n+1}^{(2)}$   \\
\hline
\end{tabular}
\end{table}
and let $\g'$ be the ordinary affine Lie algebra corresponding to $\g$
in the second row.
We will speak about the pair $(\g, \g')$ of affine Lie (super)algebras in the table.
Now $\g'$ has the same
Cartan matrix  $A=(a_{ij})$ as $\g$. We let $\Pi'=\{\alpha'_0, \alpha'_1, \dots, \alpha'_n\}$ be the set of simple roots of $\g'$ which realises the Cartan matrix, and take $(\alpha'_n, \alpha'_n)=1$.  Let $t^{1/2}=\sqrt{-1} q^{1/2}$ and $t_i=t^{(\alpha'_i,\alpha'_i)/2}$ for all $i$.

The quantum affine  algebra  $\U_{t}(\g')$ is an associative  algebra over ${K}$ with identity generated by the elements $\qe'_i,\qf'_i, {k'_i}^{\pm1}$ ($0\le i \le n$) with the following defining relations:
\begin{eqnarray}
\nonumber
&& k'_i  {k'_i}^{-1}= {k'_i}^{-1}  k'_i=1,\quad  k'_i  k'_j= k'_j  k'_i,\\
\nonumber
&&     k'_i \qe'_j  {k'_i}^{-1} = t_i^{a_{ij}}  \qe'_j,
\quad  k'_i \qf'_j  {k'_i}^{-1}= t_i^{-a_{ij} }  \qf'_j,\\
\label{eq:ef-A}
&&\qe'_i\qf'_j     -   \qf'_j\qe'_i
                   =\de_{i,j} \dfrac{  k'_i- {k'_i}^{-1} }
                                          { t-t^{-1} }, \quad \forall i, j; \\
\nonumber
&&\left(
            \mbox{Ad}_{\qe'_i}    \right)^{1-a_{ij}}    (\qe'_j)
    =\left(
             \mbox{Ad}_{\qf'_i}    \right)^{1-a_{ij}}    (\qf'_j)=0, \quad       \text{ if } i\neq j.
\end{eqnarray}
$\mbox{Ad}_{\qe'_i}(x)$ and $\mbox{Ad}_{\qf'_i}(x)$ are respectively defined by
\begin{equation*}
\begin{aligned}
&\mbox{Ad}_{\qe'_i}(x)     =         \qe'_ix   -     k'_i x  {k'_i}^{-1} \qe'_i,\\
&\mbox{Ad}_{\qf'_i}(x)      =         \qf'_ix    -    {k'_i}^{-1} x  k'_i \qf'_i.
\end{aligned}
\end{equation*}
It is well known that $\Uq(\g')$ is a Hopf algebra.

Let $\mathcal{I}_{\g'}=\mathcal{I}_{\g}$ and $\mathcal{I}_{\g'}^*=\mathcal{I}_{\g}^*$
($\mathcal{I}_{\g}$ and $\mathcal{I}_{\g}^*$ are defined before
Definition \ref{def:DR}). The Drinfeld realisation $\Dr_{t}(\g')$ of $\U_t(\g')$  is an associative algebra over ${K}$ with identity generated by the generators
\[
\x^{\pm}_{i,r},\ \qk'^{\pm1}_i, \  \h'_{i,s}, \ \ga'^{\pm 1/2}, \quad \text{for }\  (i,r)\in\mathcal{I}_{\g'},   \   (i,s)\in\mathcal{I}_{\g'}^*, \  1\le i\le n,
\]
with the defining relations \cite{Dr}:

\begin{enumerate}
\item[(1)] $[{\ga'}^{\pm 1/2},\x^{\pm}_{i,r}]  =  [{\ga'}^{\pm 1/2},\h'_{i,r}]=[{\ga'}^{\pm 1/2},\qk'_{i}]=0$,
\begin{eqnarray}
\nonumber
&& \qk'_i {\qk'_i}^{-1}  = {\qk'_i}^{-1} \qk'_i=1, \quad \qk'_i \qk'_j=\qk'_j \qk'_i,\\
\nonumber
&& \qk'_i \x^{\pm}_{j,r} {\qk'_i}^{-1}  = t_i^{\pm a_{ij}} \x^{\pm}_{j,r},\quad
[\h'_{i,r},\x^{\pm}_{j,s}] = \dfrac
                                               {u'_{i,j,r}}
                                               {r(t -t^{-1})}
                                        {\ga'}^{\mp|r|/2}  \x^{\pm}_{j,s+r},\\
\nonumber
&&[\h'_{i,r},\h'_{j,s}] = \delta_{r+s,0} \dfrac
                                                         {   u'_{i,j,r}
                                                               (  \ga'^{r}- \ga'^{-r}  )   }
                                                        {      r(t -t ^{-1})(t -t ^{-1})    },\\
\nonumber
&&[\x^{-}_{i,r}, \x^{+}_{j,s}] = \de_{ij}   \dfrac
                                                           {    \ga'^{\frac{r-s}{2}}           \hh'^{+}_{i,r+s}
                                                             -  \ga'^{\frac{s-r}{2}}   \hh'^{-}_{i,r+s}    }
                                                           {t-t^{-1} },
\end{eqnarray}
where $\hh'^{\pm}_{i,\pm r}$ are defined by
\begin{equation}\label{eq:hh'}
\begin{aligned}
&\sum_{r\in\Z}  \hh'^{+}_{i,r}u^{-r}  =  \qk'_i  \xp\left(
                                         (t-t^{-1})  \sum_{r>0} \h'_{i, r} u^{-r}
                                                               \right),\\
&\sum_{r\in\Z}  \hh'^{-}_{i,-r}u^r  = \qk'^{-1}_i  \xp\left(
                                        (t^{-1}-t)   \sum_{r>0}  \h'_{i,-r} u^r
                                                            \right),
\end{aligned}
\end{equation}
and the scalars $u'_{i,j,r}$ are given in \eqref{eq:u'};

\item[\rm(2)] {\rm Serre relations}
\begin{itemize}
\item [(A)]	For $(\g,i,j)\neq (A_{2n}^{(2)}, n, n),$
\begin{align}\label{eq:sym-dr}
[\x^{\pm}_{i,r\pm \theta}, \x^{\pm}_{j,s}]_{t^{a_{ij}}_{i}}
+[\x^{\pm}_{j,s\pm \theta}, \x^{\pm}_{i,r}]_{t^{a_{ji}}_{j}}
=0,
\end{align}
where $\theta=2$ if $\g=D_{n+1}^{(2)}$,  $(i,j)\ne (n,n)$,  and $1$ otherwise;
	
\item[(B)] $n\ne i\neq j$, or $\g'\neq D_{n+1}^{(2)}$, $j+1<i=n$,\ $\ell=1-a_{i j}$ ,
\[\begin{aligned}
sym_{r_1,\dots,r_\ell}\sum_{k=0}^\ell  (-1)^k
      \begin{bmatrix} \ell\\k\end{bmatrix}_{t}
     \x^{\pm}_{i,r_1}\dots\x^{\pm}_{i,r_k}\x^{\pm}_{j,s}\x^{\pm}_{i,r_k+1}
                              \dots\x^{\pm}_{i,r_\ell}=0;
\end{aligned}\]

\item[\rm (C)] {\rm For} $\g=A_{2n}^{(2)}$,
 \begin{align*}
&sym_{r_1,r_2,r_3} [  [\ef_{n,r_1\pm 1},\ef_{n,r_2}]_{t_n^2},  \ef_{n,r_3}]_{t_n^4}=0;\\
&sym_{r,s}\Big([\ef_{n,r\pm 2}, \ef_{n,s}]_{t_n^2} -t_n^4 [\ef_{n,r\pm 1},  \ef_{n,s\pm 1} ]_{t_n^{-6}}\Big)=0;\\
&sym_{r,s}\Big(t_n^2[[\ef_{n,r\pm1},\ef_{n,s}]_{t_n^2},\ef_{n-1,k}]_{t_n^4}\\
&+(t_n^2+t_n^{-2})[[\ef_{n-1,k},\ef_{n,r\pm1}]_{t_n^2},\ef_{n,s}]\Big)=0;
\end{align*}

\item[\rm (D)] {\rm For} $\g=D_{n+1}^{(2)}$,
\[
sym_{r, s} [  [\ef_{n-1 ,k},\ef_{n, r\pm 1}]_{t_n^2},  \ef_{n, s}]=0.
\]
\end{itemize}
\end{enumerate}
The scalars $u'_{i,j,r}$ in the above equations are defined by
\begin{equation}\label{eq:u'}
\begin{aligned}
&A_{2n}^{(2)}: \quad u'_{i,j,r}=\begin{cases}
( t_n^{2r}-t_n^{-2r})(t_n^{2r}+t_n^{-2r}+(-1)^{r-1}),   & \text{if } i=j=n,\\
 t_i^{r a_{ij}}- t_i^{-r a_{ij}},                                        & \text {otherwise };
            \end{cases}\\
&B_n^{(1)}: \quad \phantom{X}  u'_{i,j,r}=t_i^{r a_{ij}}- t_i^{-r a_{ij}};\\
&D_{n+1}^{(2)}: \quad u'_{i,j,r}=\begin{cases}
t_n^{2r}-t_n^{-2r},                 & \text{if }  i=j=n,  \\
(1+(-1)^r)(t_i^{r a_{ij}/2}-t_i^{-r a_{ij}/2}),        & \text {otherwise }.
                  \end{cases}
\end{aligned}
\end{equation}

%The expressions of the other relations depend very much on the individual twisted affine Lie algebras, %which will be spelt out later when we prove Lemma \ref{lem:iso} (2).

%\subsubsection{Isomorphisms between quantum affine algebras and Drinfeld realisations}

Applied to the quantum affine algebras under consideration, Drinfeld's  theorem \cite{Dr}
gives the following algebra isomorphism
\begin{eqnarray}\label{eq:JD-Dr-alg}
\rho:\U_{t}(\g')\stackrel{\sim}{\longrightarrow}\Dr_{t}(\g');
\end{eqnarray}
\begin{equation}\label{eq:iso-alg}
\begin{aligned}
\text {for } \g'=&A_{2n}^{(2)}:\\
&\qe'_i \mapsto \x^{+}_{i,0}, \quad \qf'_i\mapsto \x^{-}_{i,0}, \quad k'_i \mapsto \qk'_i, \quad  k'^{-}_i\mapsto \qk'^{-}_i,\quad \text {for } 1\le i\le n,\\
& \qe'_0\mapsto\ad_{\x^{-}_{1,0}} \dots \ad_{\x^{-}_{n,0}}\ad_{\x^{-}_{n,0}}\ad_{\x^{-}_{n-1,0}} \dots \ad_{\x^{-}_{2,0}}(\x^{-}_{1,1}) \ga' \qk'^{-1}_{\g'},\\
& \qf'_0\mapsto c_{\g'}\ga'^{-1}\qk'_{\g'}\ad_{\x^{+}_{1,0}} \dots \ad_{\x^{+}_{n,0}}\ad_{\x^{+}_{n-1,0}} \dots \ad_{\x^{+}_{2,0}} (\x^{+}_{1,-1}),\quad  k'_0\mapsto\ga' \qk'^{-1}_{\g'},\\
\text {for } \g'=& B_n^{(1)}:\\
&\qe'_i \mapsto \x^{+}_{i,0}, \quad \qf'_i\mapsto \x^{-}_{i,0}, \quad k'_i \mapsto \qk'_i, \quad  k'^{-}_i\mapsto \qk'^{-}_i,\quad \text {for } 1\le i\le n,\\
& \qe'_0\mapsto\ad_{\x^{-}_{2,0}} \dots \ad_{\x^{-}_{n,0}}\ad_{\x^{-}_{n,0}}\ad_{\x^{-}_{n-1,0}} \dots \ad_{\x^{-}_{2,0}}(\x^{-}_{1,1}) \ga' \qk'^{-1}_{\g'},\\
& \qf'_0\mapsto c_{\g'}\ga'^{-1}\qk'_{\g'}\ad_{\x^{+}_{2,0}} \dots \ad_{\x^{+}_{n,0}}\ad_{\x^{+}_{n-1,0}} \dots \ad_{\x^{+}_{2,0}} (\x^{+}_{1,-1}),\quad  k'_0\mapsto\ga' \qk'^{-1}_{\g'},\\
\text {for } \g'=&D_{n+1}^{(2)}:\\
&\qe'_i \mapsto \x^{+}_{i,0}, \quad \qf'_i\mapsto \x^{-}_{i,0}, \quad k'_i \mapsto \qk'_i, \quad  k'^{-}_i\mapsto \qk'^{-}_i,\quad \text {for } 1\le i\le n,\\
&\qe'_0\mapsto\ad_{\x^{-}_{1,0}} \dots \ad_{\x^{-}_{n-1,0}}(\x^{-}_{n,1}) \ga' \qk'^{-1}_{\g'},\\
&\qf'_0\mapsto c_{\g'}\ga'^{-1}\qk'_{\g'}\ad_{\x^{+}_{1,0}} \dots \ad_{\x^{+}_{n-1,0}}(\x^{+}_{n,-1}),
\quad k'_0\mapsto\ga' \qk'^{-1}_{\g'},
\end{aligned}
\end{equation}
where $\qk'_{\g'}$ is defined by
\begin{align*}
&\qk'_{\g}=\begin{cases}
\qk'^2_1\qk'^2_2\dots \qk'^2_{n}, & \g'=A_{2n}^{(2)},\\
\qk'_1\qk'^2_2\dots \qk'^2_{n},     &\g'=B_{n}^{(1)},\\
\qk'_1\qk'_2\dots \qk'_n,                & \g'=D_{n+1}^{(2)};
\end{cases}
\end{align*}
and $c_{\g}\in{K}$  can be fixed by \eqref{eq:ef-A}.

%\subsubsection{Smash products}
To prove Theorem \ref{them:DR-iso}, we will need to enlarge the quantum affine superalgebra $\Uq(\g)$
and the Drinfeld superalgebra $\Dr_q(\g)$  following \cite{XZ,Z2}.

Corresponding to each simple root  $\alpha_i$ of $\g$ for $i\neq 0$, we introduce a  group $\Z_2$ generated by $\sigma_i$ such that ${\sigma_i}^2=1$, and let $\mathrm{G}$ be the direct product of all such groups.
The group algebra ${K}\mathrm{G}$ has a standard Hopf algebra structure with the
co-multiplication given by $\Delta(\sigma_i)=\sigma_i\otimes\sigma_i$ for all $i$.
Define a left $\mathrm{G}$-action on $\Uq(\g)$ by
\begin{align}
\sigma_i\cdot e_j=(-1)^{(\alpha_i,\alpha_j)}e_j, \quad \sigma_i\cdot f_j=(-1)^{(\alpha_i,\alpha_j)}f_j, \quad \sigma_i\cdot k_j=k_j, \quad\text{$i\ne 0$},
\end{align}
which preserves the multiplication of $\Uq(\g)$.  This  defines a  left ${K}\mathrm{G}$-module algebra structure on $\Uq(\g)$. Similarly, let $\mathrm{G}$ act on $\Dr_q(\g)$ by
\begin{align}\label{eq:G-act}
&\sigma_i\cdot\xi^\pm_{j, r}=(-1)^{(\alpha_i,\alpha_j)}\xi^\pm_{j, r},
\quad \sigma_i\cdot\kappa_{j, r}= \kappa_{j, r},
\quad  \sigma_i\cdot\gamma_j=\gamma_j,  \quad  \sigma_i\cdot\gamma=\gamma,
\end{align}
for all $i, j\ge 1$ and $r\in\Z$. This again preserves the multiplication of  $\Dr_q(\g)$.

By using a standard construction in the theory of Hopf algebras,  we manufacture the smash product superalgebras
\begin{eqnarray}
\UU_q(\g):=\Uq(\g)\sharp{K}\mathrm{G}, \quad \D_q(\g):=\Dr_q(\g)\sharp{K}\mathrm{G},
\end{eqnarray}
which have underlying vector superspaces  $\Uq(\g)\otimes{K}\mathrm{G}$
and $\Dr_q(\g)\otimes{K}\mathrm{G}$ respectively,
where ${K}\mathrm{G}$ is regarded as purely even.
The multiplication of  $\UU_q(\g)$ (resp. $\D_q(\g)$) is defined,
for all $x, y$ in  $\Uq(\g)$ (resp. $\D_q(\g)$)  and $\sigma, \tau\in \mathrm{G}$, by
\[
(x\otimes \sigma)(y\otimes \tau)= x \sigma.y\otimes \sigma\tau.
\]
We will write $x \sigma$ and $\sigma x$ for $x\otimes \sigma$ and  $(1\otimes\sigma)(x\otimes 1)$ respectively.

In exactly the same way, we introduce a  group $\Z_2$ corresponding to each simple root  $\alpha'_i$ of $\g'$ with $i\neq 0$. The group is generated by $\sigma'_i$ such that ${\sigma'_i}^2=1$.
Let $\mathrm{G}'$ be the direct product of all such groups, and define a $\mathrm{G}'$-action on $\U_{t}(\g')$  by
\begin{align}
&\sigma'_i\cdot e'_j=(-1)^{(\alpha'_i,\alpha'_j)}e'_j, \quad \sigma'_i\cdot f'_j=(-1)^{-(\alpha'_i,\alpha'_j)}f'_j, \quad \sigma'_i\cdot k'_j=k'_j, \quad\text{$i\ne 0$}.
\end{align}
This induces a $\mathrm{G}'$-action on $\Dr_t(\g')$ analogous to \eqref{eq:G-act}.
Now we introduce the smash product algebras
\[
\UU_{t}(\g')=\U_{t}(\g')\sharp{K}\mathrm{G}', \quad \D_{t}(\g')=\Dr_{t}(\g')\sharp{K}\mathrm{G}'.
\]
Clearly we can extend equation \eqref{eq:JD-Dr-alg} to the algebra isomorphism
\begin{eqnarray}\label{eq:dr-iso-A}
\UU_{t}(\g')\stackrel{\sim}{\longrightarrow}\D_{t}(\g'),
\end{eqnarray}
which is the identity on ${K}\mathrm{G}'$.

\subsection{Quantum correspondences}\label{sect:QCs}

We classified the quantum correspondences in \cite[Theorem 4.9]{XZ}; 
the following ones are relevant to the present paper,
which were first established in \cite{Z2}.
\begin{lem}[\cite{XZ}, \cite{Z2}]\label{lem:correspon}
For each pair $(\g, \g')$ in Table \ref{tbl:g}, there exists an isomorphism
$
\psi: \UU_{q}(\g)\stackrel{\sim}{\longrightarrow}\UU_{t}(\g')
$
of associative algebras given by
\begin{equation}\label{eq:affine-map}
\begin{aligned}
&e_0 \mapsto \iota_{e}e'_0, \quad  \ f_0\mapsto \iota_{f} f'_0, \quad  \ k^{\pm 1}_0 \mapsto  \iota_{e} \iota_{f} k'^{\pm 1}_0,\\
& \sigma_i\mapsto \sigma'_i,\quad
e_i \mapsto \left(\prod_{k=i+1}^{m+n}\sigma'_k\right)  e'_i, \quad f_i \mapsto \left(\prod_{k=i}^{m+n}\sigma'_k\right) f'_i, \quad k_i \mapsto \sigma'_i  k'_i,
\end{aligned}
\end{equation}
for $i\neq 0$, where  $\iota_{e}, \iota_{f}\in{K}G'$ are defined by
\begin{equation*}
    \iota_{e}=\begin{cases}
1, & \g'=A_{2n}^{(2)},\\
\prod_{i=2}^n\sigma'_i, &\g'=B_{n}^{(1)},\\
\prod_{i=0}^n\sigma'_{2+2i} ,&\g'=D_{n+1}^{(2)},
           \end{cases}
\quad
  \iota_{f}=\begin{cases}
1, & \g'=A_{2n}^{(2)},\\
\prod_{i=1}^n\sigma'_i  , &\g'=B_{n}^{(1)},\\
\prod_{i=0}^n\sigma'_{1+2i},&\g'=D_{n+1}^{(2)}
           \end{cases}
\end{equation*}
with $\sigma'_j=1$ for all $j>n$.
\end{lem}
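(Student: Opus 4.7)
\medskip

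\noindent\textbf{Proof proposal.} The plan is to verify that the map $\psi$ defined by \eqref{eq:affine-map} preserves the defining relations of $\UU_q(\g)$ (so it extends to an algebra homomorphism), and then to produce an explicit two-sided inverse of the same form. The underlying arithmetic identity is $t^{1/2}=\sqrt{-1}\,q^{1/2}$, so that $t=-q$ and $t_i^{a_{ij}}=(-1)^{a_{ij}}q_i^{a_{ij}}\cdot\mu_{ij}$ for certain sign factors $\mu_{ij}$ depending on the normalisations in each of the three cases; the role of the group algebra ${K}\mathrm{G}'$ in the smash product is precisely to absorb these signs. Concretely, when one computes $\psi(k_i)\psi(e_j)\psi(k_i^{-1})$ using \eqref{eq:affine-map}, the generator $\sigma'_i k'_i$ first produces the scalar $t_i^{a_{ij}}=(-1)^{(\alpha_i,\alpha_j)}q_i^{a_{ij}}$ by commuting $k'_i$ past $e'_j$, and then the factor $(-1)^{(\alpha_i,\alpha_j)}$ is cancelled by the $\sigma'_i$-action on $\left(\prod_{k=j+1}^{n}\sigma'_k\right)e'_j$. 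The same accounting makes the super-bracket $e_ie_j-(-1)^{[e_i][e_j]}e_je_i$ turn into the ordinary bracket $e'_ie'_j-e'_je'_i$ after the $\sigma'$-factors have been moved past each other.

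My plan of execution is as follows. First I would verify the relations involving only $e_i,f_i,k_i$ for $i\neq 0$, i.e.\ the ``finite part'' of the superalgebra. The Cartan relations are immediate once one records the identity $t_i^{a_{ij}}=(-1)^{(\alpha_i,\alpha_j)}q_i^{a_{ij}}$ for the relevant normalisations. For $[e_i,f_j]_{\text{super}}=\delta_{ij}(k_i-k_i^{-1})/(q_i-q_i^{-1})$, one commutes the products $\prod_{k=i+1}^n\sigma'_k$ and $\prod_{k=j}^n\sigma'_k$ past each other; the resulting sign is $(-1)^{[e_i][f_j]}$ whenever $i\ne j$ (this is the content of \eqref{eq:G-act}), so the left-hand side becomes $e'_if'_j-f'_je'_i$ times a group element which equals $1$ when $i=j$, producing exactly $\delta_{ij}(k_i-k_i^{-1})/(q_i-q_i^{-1})$ via the $\mathrm{G}'$-action on $\sigma'_i k'_i-\sigma'_i k_i'^{-1}$. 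The Serre relations reduce to the ordinary Serre relations \eqref{eq:ef-A} for $\U_t(\g')$ by the same bookkeeping, noting that $[e_i,e_j]_{q_i^{a_{ij}}}^{\text{super}}$ maps to $[e'_i,e'_j]_{t_i^{a_{ij}}}$ up to a global sign that is harmless because the Serre relations are homogeneous.

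Second I would verify the three affine relations involving $e_0,f_0,k_0$, which is the genuinely new content of the correspondence. The factors $\iota_e,\iota_f\in{K}\mathrm{G}'$ are designed so that the Chevalley pairings $[e_0,f_0]$ and the $k_i$-eigenvalues on $e_0,f_0$ produce the correct signs; indeed $\iota_e\iota_f$ acts on both $e'_0$ and $f'_0$ trivially in case $A_{2n}^{(2)}$ (where no signs are needed), while for $B_n^{(1)}$ and $D_{n+1}^{(2)}$ the specific subsets of $\sigma'_i$ entering $\iota_e,\iota_f$ exactly encode which Cartan pairings $(\alpha'_0,\alpha'_i)$ must be negated in order to reconcile the odd root $\alpha_0$ on the super side with the even root $\alpha'_0$ on the ordinary side. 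The Serre relations $(\mathrm{Ad}_{e_0})^{1-a_{0j}}(e_j)=0$ and its $f$-counterpart become the corresponding Serre relations for $e'_0,f'_0$ after the ${K}\mathrm{G}'$-coefficients have been commuted to the far right.

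The main obstacle is checking the affine node case-by-case because the structure of $\iota_e,\iota_f$ is different for each pair $(\g,\g')$ and the sign computations are easy to get wrong; moreover, for $D_{n+1}^{(2)}$ one has to be careful that $(\alpha'_0,\alpha'_1)\ne 0$ while $\alpha_0$ of $\osp(2|2n)^{(2)}$ is odd, so the super-commutator $[e_0,e_1]_{\text{super}}$ truly picks up a sign that must be furnished by $\iota_e\prod_{i=0}^n\sigma'_{2+2i}$. Once well-definedness is established, bijectivity follows easily: define a candidate inverse $\psi^{-1}$ by the evident formulas (send each $e'_i,f'_i,k'_i$ to the corresponding combination of $e_j,f_j,k_j$ and $\sigma_j$), check by the same sign-bookkeeping that $\psi\circ\psi^{-1}$ and $\psi^{-1}\circ\psi$ fix all generators, and conclude $\psi$ is an isomorphism. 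The identity is the natural one on ${K}\mathrm{G}\cong{K}\mathrm{G}'$.
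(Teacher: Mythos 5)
The paper does not prove Lemma \ref{lem:correspon} at all: it is imported verbatim from \cite[Theorem 4.9]{XZ} and \cite{Z2}, so there is no internal argument to compare yours against. Your plan --- check that \eqref{eq:affine-map} preserves the defining relations, with the group algebra ${K}\mathrm{G}'$ absorbing the signs coming from $t=-q$, and then exhibit the evident inverse --- is exactly the kind of direct verification carried out in those references, and the central identity $t_i^{a_{ij}}=(-1)^{(\alpha'_i,\alpha'_j)}q_i^{a_{ij}}$ together with the commutation $e'_j\sigma'_i=(-1)^{(\alpha'_i,\alpha'_j)}\sigma'_i e'_j$ is indeed what makes the Cartan relations and the super-signs in $e_if_j-(-1)^{[e_i][f_j]}f_je_i$ come out right. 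So the outline is sound.

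One point you gloss over deserves emphasis, because without it the verification at the node $i=j=n$ actually fails by a scalar rather than a sign. In the standard normalisation of \eqref{eq:xx-q} the right-hand side of the $i=j=n$ relation carries the denominator $q_n-q_n^{-1}=q^{1/2}-q^{-1/2}$, whereas the target relation \eqref{eq:ef-A} carries $t-t^{-1}=-(q-q^{-1})$; these differ by the factor $q^{1/2}+q^{-1/2}$, which no product of $\sigma'_k$'s can produce. This is precisely why the paper opens Section \ref{sec:DR} by rescaling $e_{i}\mapsto[(\alpha_i,\alpha_i)/2]_q\,e_{i}$ (and similarly for $\h_{i,s}$) so that every denominator becomes $q-q^{-1}$ and $q^{\pm1/2}$ disappears from the relations; the lemma as used in the paper is stated in that normalisation. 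Your ``sign factors $\mu_{ij}$ depending on the normalisations'' hint at this but do not resolve it; in a full write-up you must either fix the rescaled presentation from the outset or insert explicit scalar renormalisations of the generators at the short root. A second, smaller point: for $\osp(2|2n)^{(2)}$ the roots $\alpha_0$ and $\alpha_1$ have $[e_0][e_1]=0$ (only $\alpha_0$ is odd), so no super-sign arises in $[e_0,e_1]$ itself; the sign that $\iota_e$ must supply comes from reconciling $k'_1e'_0k_1'^{-1}=t_1^{a_{10}}e'_0$ with $k_1e_0k_1^{-1}=q_1^{a_{10}}e_0$ and from the quantum Serre relations at the affine node, not from the parity of the bracket. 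With these two corrections your argument goes through and supplies a proof where the paper offers only a citation.
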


\begin{rem} 
Within the context of Hopf algebras over braided tensor categories, 
the above associative algebra isomorphism becomes a Hopf algebra 
isomorphism. The same type of Hopf superalgebra isomorphisms, referred to as quantum correspondences in \cite{XZ}, exist for a much wider range of affine Lie superalgebras \cite[Theorem 1.2]{XZ}. Some of them appear as S-dualities in string theory as discovered in \cite{MW}.
\end{rem}

Now for each $\g'$, we introduce a  surjection $o: I_n=\{1,\dots,n\}\to \{\pm 1\}$ defined by  $o(i)=(-1)^{n-i}$. We also define $c:=c(\g)$  such that
$c=1/2$ if $\g=\osp(2|2n)^{(2)}$,  and  1 otherwise.

We have the following result.
\begin{theo}\label{lem:iso}
For each pair $(\g, \g')$ in Table \ref{tbl:g},
there is an isomorphism $\varphi: \D_{q}(\g)\stackrel{\sim}{\longrightarrow}\D_{t}(\g')$
of associative algebras given by
\begin{equation}\label{eq:dr-map}
\begin{aligned}
& \ga^{1/2}\mapsto \ga'^{1/2}, \ \ \h_{i,r} \mapsto -o(i)^{r c}  \h'_{i,r},
       \ \ \qk^{\pm 1}_i \mapsto \sigma'_i  \qk'^{\pm 1}_i,
       \ \ \sigma_i\mapsto \sigma'_i,\\
&     \e_{i,r} \mapsto o(i)^{r c} \left(\prod_{k=i+1}^{m+n}\sigma'_k\right)  \x^{+}_{i,r},
\quad \f_{i,r}  \mapsto o(i)^{r c} \left(\prod_{k=i+1}^{m+n}\sigma'_k\right)  \x^{-}_{i,r}.
\end{aligned}
\end{equation}
\end{theo}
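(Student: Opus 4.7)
The strategy is to verify that the assignment $\varphi$ in \eqref{eq:dr-map} respects every defining relation of $\D_q(\g)$ listed in Definition \ref{def:DR}, so that $\varphi$ extends to an algebra homomorphism. Bijectivity then follows because the formulas are invertible: the $\sigma'_k$ are units of order two and $o(i)^{rc}$ is a nonzero scalar, so an inverse map of the same shape can be written down on the generators of $\D_t(\g')$ by solving for $\x^{\pm}_{i,r}, \h'_{i,r}, \qk'_i$ in terms of their images.

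I would first dispose of the easy relations: centrality of $\ga^{\pm 1/2}$, commutativity of the $\qk_i$, and the weight relation $\qk_i\ef_{j,r}\qk_i^{-1}=q_i^{\pm a_{ij}}\ef_{j,r}$. The last one is nontrivial only through a sign bookkeeping: conjugating the image $\left(\prod_{k=j+1}^{n}\sigma'_k\right)\x^{\pm}_{j,r}$ by $\sigma'_i\qk'_i$ produces $(-1)^{(\alpha'_i,\alpha'_j)}t_i^{\pm a_{ij}}$, and since $t=-q$ this collapses to $q_i^{\pm a_{ij}}$ after accounting for the parity of $\alpha_i$. The key family is \eqref{eq:hx}--\eqref{eq:hh}, which involves the structure constants $u_{i,j,r}$. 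The plan is to check term by term that under $t_i^r\mapsto (-1)^{r(\alpha'_i,\alpha'_i)/2}q_i^r$ together with the sign attached to $\h_{i,r}\mapsto -o(i)^{rc}\h'_{i,r}$, the expressions $u'_{i,j,r}$ of \eqref{eq:u'} become exactly $u_{i,j,r}$ of \eqref{eq:u-def}. For instance, the identity
\[
(t_n^{2r}-t_n^{-2r})(t_n^{2r}+t_n^{-2r}+(-1)^{r-1})\;=\;q_n^{4r}-q_n^{-4r}-q_n^{2r}+q_n^{-2r}
\]
follows from $t_n^2=-q_n^2$ and takes care of $i=j=n$ for the pair $(\osp(1|2n)^{(1)}, A_{2n}^{(2)})$; analogous direct substitutions dispose of the other cases. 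Once \eqref{eq:hh-hat} and \eqref{eq:hh'} are matched, the relation \eqref{eq:xx} follows.

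The main obstacle will be the Serre relations. Under $\varphi$, each $\ef_{i,r}$ picks up a factor $\prod_{k=i+1}^{n}\sigma'_k$, and commuting these factors through a monomial produces signs $(-1)^{(\alpha'_i,\alpha'_j)}$ that convert a $\Z_2$-graded bracket in $\D_q(\g)$ into an ordinary bracket in $\D_t(\g')$. Combined with the $o(i)^{rc}$ factors, the deformation parameter $q_i^{a_{ij}}$ appearing inside a super-bracket is replaced by $t_i^{a_{ij}}$, so relations (A) and (B) translate to their ordinary counterparts. The sensitive cases are the odd-node relations: (D) for $\osp(1|2n)^{(1)}$ and (E) for $\osp(2|2n)^{(2)}$, together with the mixed relation (C) at $i=n$ whose $q$-binomial coefficients are taken at $\sqrt{-1}\,q_n$. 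The choice $t_n^{1/2}=\sqrt{-1}\,q^{1/2}$ is exactly what is needed so that $\sqrt{-1}\,q_n$-quantum integers in $\D_q(\g)$ become $t_n$-quantum integers in $\D_t(\g')$, converting (C)--(E) into the ordinary Serre relations at node $n$ of $A_{2n}^{(2)}$ and $D_{n+1}^{(2)}$. The combinatorial bookkeeping is parallel to that carried out for the Chevalley-presentation correspondence in Lemma \ref{lem:correspon} and in \cite{XZ, Z2}, and once every relation is checked, the inverse of $\varphi$ is constructed in the same manner, yielding the stated isomorphism.
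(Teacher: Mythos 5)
Your proposal follows essentially the same route as the paper's proof: verify each defining relation of $\D_q(\g)$ on the images of the generators, using the sign commutation $\x^{\pm}_{i,r}\sigma'_j=(-1)^{(\alpha'_i,\alpha'_j)}\sigma'_j\x^{\pm}_{i,r}$ together with $t=-q$ to match $u'_{i,j,r}$ with $u_{i,j,r}$ and the $\sqrt{-1}\,q_n$-binomials with $t_n$-binomials, then invert the formulas on generators. The only quibble is the typo $t_n^{1/2}=\sqrt{-1}\,q^{1/2}$, which should read $t_n=t^{1/2}=\sqrt{-1}\,q^{1/2}$; otherwise the argument matches the paper's.
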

\begin{proof}
If we can show that the map $\varphi$ indeed gives rise to a homomorphism of associative algebras,  then by inspecting \eqref{eq:dr-map}, we immediately see that it is an isomorphism with the inverse map given by
\begin{equation}\label{eq:iso-inv}
\begin{aligned}
\varphi^{-1}:\quad
&  \ga'\mapsto\ga, \quad \h'_{i,r} \mapsto -o(i)^{c r}  \h_{i,r},
        \quad \qk'_i \mapsto \sigma_i  \qk_i,
        \quad \sigma'_i\mapsto \sigma_i,\\
&\      \x^{+}_{i,r} \mapsto o(i)^{c r} \left(\prod_{k=i+1}^{m+n}\sigma_k\right)  \e_{i,r},
\quad \x^{-}_{i,r}  \mapsto o(i)^{c r} \left(\prod_{k=i+1}^{m+n}\sigma_k\right)  \f_{i,r}.
\end{aligned}
\end{equation}
We prove that $\varphi$ is an algebra homomorphism by showing that
the elements $\varphi(\ef_{i,r})$, $\varphi(\h_{i,r})$, $\varphi(\qk^{\pm}_i)$, $\varphi(\ga)$, $\varphi(\sigma_i)$ in $\D_q(\g') $ satisfy the  defining relations of $\D_q(\g)$.

Let us start by verifying the first set of relations in Definition \ref{def:DR}.
Using $\x^{\pm}_{i,r}\sigma'_j=(-1)^{(\alpha'_i,\alpha'_j)}\sigma'_j\x^{\pm}_{i,r}$ and $(-1)^{(\alpha'_i,\alpha'_j)}t_i^{\pm a_{ij}}=q_i^{\pm a_{ij}}$, we immediately obtain
\[
\varphi(\qk_i) \varphi(\ef_{j,r}) \varphi(\qk_i^{-1})=q_i^{\pm a_{ij}} \varphi(\ef_{j,r}).
\]
 Since $u_{i,j,r}=o(i)^{c r}o(j)^{c r}u'_{i,j,r}$,  we have
\begin{eqnarray*}
&[\varphi(\h_{i,r}), \varphi(\ef_{j,s})]  = \dfrac{  u_{i,j,r} \varphi(\ga)^{\mp|r|/2}  }
                                                   {   r(q-q^{-1})  }
                                                   \varphi( \ef_{j,s+r}),\\
&[\varphi(\h_{i,r}),\varphi(\h_{j,s})]=\delta_{r+s,0}
                              \dfrac{ u_{i,j,r} (\varphi(\ga)^{r}-\varphi(\ga)^{-r})    }
                                                           {  r   (q-q^{-1})(q-q^{-1})  } .
\end{eqnarray*}
Let $\Phi'_{j}=\prod_{k=j}^n\sigma'_k$.  Then
\begin{eqnarray*}
\x^{+}_{n,r}\Phi'_{j}=(-1)^{\delta_{n,j}}\Phi'_{j}\x^{+}_{n,r},
&&\x^{+}_{i,r}\Phi'_{j}=(-1)^{\delta_{i,j}+\delta_{i+1,j}}\Phi'_{j}\x^{+}_{i,r}, \quad i\neq n.
\end{eqnarray*}
Using this we obtain
\begin{eqnarray*}
&&\varphi(\e_{i,r})\varphi(\f_{j,s})-(-1)^{[\e_{i,r}][\f_{j,s}]}\varphi(\f_{j,s})\varphi(\e_{i,r}) \\
&&=\de_{i,j} \dfrac{   \varphi(\ga)^{\frac{r-s}{2}} \varphi(\qk_i)\varphi(\hh^{+}_{i,r+s})
                                   -  \varphi(\ga)^{\frac{s-r}{2}}\varphi(\qk_i)^{-1}\varphi(\hh^{-}_{i,r+s})   }
                                         {  q-q^{-1}  },
\end{eqnarray*}
where we have used $\varphi(\hh^{+}_{i,r+s})=o(i)^{c  r}\hh'^{+}_{i,r+s}$ since $\varphi(\h_{i,r})= -o(i)^{c  r} \h'_{i,r}$. Now we have the obvious relations
\begin{eqnarray*}
&&[\varphi(\e_{n,r}), \varphi(\e_{j,s})]_{q^{a_{nj}}_{n}}=(-1)^{\delta_{n-1,j}}\Phi'_{j+1}
     [\x^{+}_{n,r},\x^{+}_{j,s}]_{t^{a_{nj}}_{n}},\\
&&[\varphi(\e_{i,r}), \varphi(\e_{j,s})]_{q^{a_{ij}}_{i}}
                                                      =(-1)^{\delta_{i,j}+\delta_{i-1,j}}\Phi'_{i+1}\Phi'_{j+1}
    [\x^{+}_{i,r},\x^{+}_{j,s}]_{t^{a_{ij}}_{i}}, i\neq n.
\end{eqnarray*}
Using them  together with \eqref{eq:sym-dr}, we obatin
$sym_{r,s}[\varphi(\e_{i,r+ \theta}),\varphi(\e_{j,s})]_{q^{a_{ij}}_{i}}=0$,  if $(\g,i,j)\neq (A_{2n}^{(2)}, n, n)$.  We can similarly show that $sym_{r,s}[\varphi(\f_{i,r- \theta}),\varphi(\f_{j,s})]_{q^{a_{ij}}_{i}}=0$.

The Serre relations in Definition \ref{def:DR}
can be verified in the same way.  For example, in the case $\g'=B_n^{(1)}$, we have
\[\begin{aligned}
&    sym_{r_1,r_2,r_3} \sum_{k=0}^3\begin{bmatrix} 3\\k\end{bmatrix}_{\sqrt{-1} q_{n}}
               {\varphi(\e_{n,r_1})}\dots
                          {\varphi(\e_{n,r_k})} \varphi(\e_{n-1,s}){\varphi(\e_{n,r_{k+1}})}
                  \dots {\varphi(\e_{n,r_3})}\\
&=\sigma'_n sym_{r_1,r_2,r_3}
     \sum_{k=0}^3(-1)^k\begin{bmatrix} 3\\k\end{bmatrix}_{t_n}
              {\x^{+}_{n,r_1}}\dots
                      {\x^{+}_{n,r_k}}\x^{+}_{n-1,s}{\x^{+}_{n,r_{k+1}}}
               \dots {\x^{+}_{n,r_3}}=0.
\end{aligned}
\]
We omit the proof of the other Serre relations.
\end{proof}

The map $\varphi$ becomes a Hopf superalgebra isomorphism up to picture changes and Drinfeld twists; see \cite[Theorem 1.2]{XZ} for details.

\subsection{Proof of Theorem \ref{them:DR-iso}}\label{sec:DR-osp1}
Theorem \ref{them:DR-iso} is an easy consequence of Theorem \ref{lem:iso}.
\begin{coro}
Theorem \ref{them:DR-iso} holds for each pair $(\g, \g')$ in Table \ref{tbl:g}.
\end{coro}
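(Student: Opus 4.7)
The plan is to obtain $\Psi$ as a composition of three isomorphisms already at our disposal. Set
\[
\Psi := \varphi^{-1} \circ \widetilde{\rho} \circ \psi : \UU_q(\g) \stackrel{\sim}{\longrightarrow} \D_q(\g),
\]
where $\psi : \UU_q(\g) \to \UU_t(\g')$ is the quantum correspondence of Lemma \ref{lem:correspon}, $\widetilde{\rho} : \UU_t(\g') \to \D_t(\g')$ is the algebra isomorphism \eqref{eq:dr-iso-A} extending Drinfeld's \eqref{eq:JD-Dr-alg} by the identity on $K\mathrm{G}'$, and $\varphi : \D_q(\g) \to \D_t(\g')$ is the isomorphism of Theorem \ref{lem:iso}. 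Each factor is an isomorphism of associative superalgebras, so $\Psi$ is as well. The task is then to verify that $\Psi$ restricts to an isomorphism $\Uq(\g) \to \Dr_q(\g)$ and agrees with the formulas in Theorem \ref{them:DR-iso} on Chevalley generators.

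For the finite generators $\qe_i, \qf_i, k_i$ with $i \ne 0$, the verification is immediate. Applying $\psi$ to $\qe_i$ via \eqref{eq:affine-map} produces $\bigl(\prod_{k>i}\sigma'_k\bigr)\qe'_i$; then $\widetilde{\rho}$ sends $\qe'_i$ to $\x^{+}_{i,0}$ by \eqref{eq:iso-alg}; finally $\varphi^{-1}$ sends $\x^{+}_{i,0}$ to $\bigl(\prod_{k>i}\sigma_k\bigr)\e_{i,0}$ and $\sigma'_k$ to $\sigma_k$ by \eqref{eq:iso-inv}. Since $\sigma_k^2=1$ in the abelian group $\mathrm{G}$, the two products of $\sigma$'s cancel and we obtain $\Psi(\qe_i)=\e_{i,0}$. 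The identical argument, applied with the appropriate index set for $\qf_i$ and with $\x^{-}_{i,0}$ in place of $\x^{+}_{i,0}$, yields $\Psi(\qf_i)=\f_{i,0}$, and directly $\Psi(k_i)=\qk_i$.

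For the affine generators $\qe_0, \qf_0, k_0$ the strategy is the same but more delicate. Substituting the explicit image of $\qe'_0$ from \eqref{eq:iso-alg} into $\psi(\qe_0)=\iota_e\qe'_0$ and applying $\varphi^{-1}$, each nested factor $\ad_{\x^{-}_{i,0}}(y) = \x^{-}_{i,0}y - k'^{-1}_i y k'_i \x^{-}_{i,0}$ becomes, after inserting $\varphi^{-1}(\x^{-}_{i,0}) = \bigl(\prod_{k>i}\sigma_k\bigr)\f_{i,0}$ and $\varphi^{-1}(k'^{\pm 1}_i) = (\sigma_i\qk_i)^{\pm 1}$, an expression in which the $\sigma_i$'s commute past each $\ef_{j,s}$ with a sign $(-1)^{(\alpha_i,\alpha_j)}$. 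These signs convert the ordinary commutator on the $\g'$ side into the super-commutator $\ad_{\f_{i,0}}$ of \eqref{eq:ad}, while the residual group elements -- those coming from $\iota_e$, from each $\varphi^{-1}(\x^{-}_{i,\cdot})$, and from each $\varphi^{-1}(\qk'_i)$ appearing inside the iterated adjoint -- collect to the identity, thanks to the explicit definitions of $\iota_e$ and $\iota_f$ in Lemma \ref{lem:correspon} matched case by case with the adjoint pattern in \eqref{eq:iso-alg}. The scalar $c_\g \in K$ in the formula for $\Psi(\qf_0)$ is fixed uniquely by imposing \eqref{eq:xx-q}, that is $[\Psi(\qe_0),\Psi(\qf_0)] = (k_0-k_0^{-1})/(q_0-q_0^{-1})$.

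The main obstacle is precisely the sign bookkeeping just sketched: one must verify, case by case for each pair $(\g,\g')$ in Table \ref{tbl:g}, that the $\sigma$-factors from $\psi$ and from $\varphi^{-1}$, the powers $o(i)^{c r}$ in \eqref{eq:dr-map}, and the super-parities attached to the odd $\ef_{n,r}$ all conspire to produce exactly the expressions in Theorem \ref{them:DR-iso}, with no stray group elements remaining. Once this has been confirmed, the restriction of $\Psi$ lands inside $\Dr_q(\g) \subset \D_q(\g)$ on every Chevalley generator, hence on all of $\Uq(\g)$, yielding the desired superalgebra isomorphism. The Hopf superalgebra reinterpretation asserted in Remark \ref{rem:hopf-iso} then follows because each of $\psi$, $\widetilde{\rho}$, and $\varphi$ can be upgraded to a Hopf (super)algebra isomorphism within the braided framework of \cite{XZ}.
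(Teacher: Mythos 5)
Your proposal is correct and follows essentially the same route as the paper: compose the quantum correspondence $\psi$ of Lemma \ref{lem:correspon}, the smash-product extension of Drinfeld's isomorphism \eqref{eq:JD-Dr-alg}, and the inverse of the isomorphism $\varphi$ of Theorem \ref{lem:iso}, then check that the composite restricts to $\Uq(\g)\otimes 1\to\Dr_q(\g)\otimes 1$ (you even write the composition in the correct order $\varphi^{-1}\circ\widetilde\rho\circ\psi$, where the paper's displayed formula has an inconsequential slip). Your explicit tracking of the $\sigma$-factors and signs on the Chevalley generators simply fills in what the paper dismisses as ``one can easily check.''
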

\begin{proof} By composing the isomorphism \eqref{eq:JD-Dr-alg} with those in Lemma \ref{lem:correspon} and Theorem  \ref{lem:iso},
we immediately  obtain the algebra isomorphism
$$\Phi=\varphi\circ\rho\circ\psi: \UU_{q}(\g)  \mapsto  \D_{q}(\g).$$
Note that $\Phi$ preserves the $\Z_2$-grading, thus is an isomorphism of superalgebras.

One can easily check that $\Phi(\U_q(\g)\otimes 1)=\Dr_q (\g)\otimes 1$. Let
$\eta: \U_{q}(\g) \longrightarrow  \UU_{q}(\g)$ be the embedding $x\mapsto x\otimes 1$,
and $\upsilon: \Dr_q (\g)\otimes 1 \longrightarrow \Dr_q (\g)$ be the natural isomorphism
$y\otimes 1\mapsto y$.  Then $\upsilon\circ\Phi\circ\eta$ is the superalgebra isomorphism $\Psi$ of Theorem \ref{them:DR-iso}.
\end{proof}

We comment on a possible alternative approach to the proof of Theorem \ref{them:DR-iso}.
For the affine Lie superalgebras in \eqref{eq:g},
the combinatorics of the affine Weyl groups of the
root systems essentially controls the structures of the affine Lie superalgebras themselves.
The corresponding quantum affine superalgebras
have enough Lusztig automorphisms, which can in principle be used to prove
Theorem \ref{them:DR-iso} by following the approach of \cite{Be}.
It will be interesting to work out the details of such a proof,
although it is expected to be much more involved than the one given here.

%=======================================================%

\section{Vertex operator  representations}\label{sect:vertex}
We construct vertex  operator representations of the quantum affine superalgebras $\Uq(\g)$ for all $\g$ in \eqref{eq:g}. These representations are level $1$
irreducible integrable highest weight representations relative to the standard triangular
decomposition \eqref{eq:triangular-1} of  $\Uq(\g)$ given below.  By level $1$ representations, we mean those with $\gamma$ acting by multiplication by $\pm q$ or  $\sqrt{-1}q$.

Our construction involves generalising to the quantum affine superalgebra context some aspects of  \cite{LP}.
The vertex operators obtained here have considerable similarities with those  \cite{Jn1, JnM} for ordinary twisted quantum affine algebras.

\subsection{Some general facts} 
We now discuss some simple facts, which  will be used to study the representation theory of the quantum affine superalgebras.
\subsubsection{Triangular decompositions}
We will need two triangular decompositions for the quantum affine superalgebra $\Uq(\g)$ for each $\g$ in \eqref{eq:g}.

The standard triangular decomposition is
\begin{eqnarray}\label{eq:triangular-1}
\begin{aligned}
&\Uq(\g)=\U_q^{(-)} \U_q^{(0)} \U_q^{(+)},  \quad {with}\\
& \U_q^{(+)} \text{  generated by  } \e_{i,0},  \e_{i,r}, \f_{i,r},   \hh_{i,r}^{\pm}, \ \text{for $r>0$, \ $1\le i\le n$}, \\
& \U_q^{(0)} \text{  generated by  } \ga_i^{\pm 1}, \ \ga^{\pm 1/2}, \ \text{for $1\le i\le n$}, \\
& \U_q^{(-)} \text{  generated by   } \f_{i,0},   \f_{i,r}, \e_{i,r},  \hh_{i,r}^{\pm}, \ \text{for $r<0$, \ $1\le i\le n$},
\end{aligned}
\end{eqnarray}
where  $\U_q^{(-)}$, $\U_q^{(0)}$ and $\U_q^{(+)}$ are all super subalgebras of $\Uq(\g)$.
In terms of the Chevalley generators in Definition \ref{defi:quantum-super},  $\U_q^{(+)}$, $\U_q^{(-)}$ and $\U_q^{(0)}$ are respectively generated by the elements $e_j$, $f_j$ and $k^{\pm 1}_j$ with $0\le j\le n$.

The other triangular decomposition is
\begin{eqnarray}\label{eq:triangular-2}
\begin{aligned}&\Uq(\g)=\U_q^- \U_q^0 \U_q^+, \quad \text{with}\\
& \U_q^+ \text{  generated by  } \e_{i,r}, \  \text{for $1\le i\le n$, \  $r\in\Z$},\\
& \U_q^0 \text{  generated by   } \hh_{i,r}^{\pm}, \ \ga^{\pm 1/2}, \  \text{for $1\le i\le n$,  \  $r\in\Z$},  \\
& \U_q^- \text{  generated by   } \f_{i,r}, \  \text{for $1\le i\le n$,  \  $r\in\Z$},
\end{aligned}
\end{eqnarray}
where $\U_q^{-}$, $\U_q^{0}$ and $\U_q^{+}$ are also super subalgebras.
The existence of this triangular decomposition is easy to see from the Drinfeld realisation, but very obscure from the point of view of Definition \ref{defi:quantum-super}.

Let $B_q:=\U_q^{(0)} \U_q^{(+)}$ or $B_q:=\U_q^0 \U_q^+$ depending on the triangular decomposition.  A vector $v_0$ in a $\Uq(\g)$-module is a highest weight vector if $\C(q) v_0$ is a $1$-dimensional $B_q$-module.  A $\Uq(\g)$-module generated by a highest weight vector is a highest weight  module with respect to the given triangular decomposition. We will study highest weight representations with respect to both triangular decompositions in later sections.

\subsubsection{Comments on spinoral type modules} \label{sect:type-1}
One can easily see that there exist the following superalgebra automorphisms of $\Uq(\g)$.
\begin{align}\label{eq:auto-1}
\iota_\varepsilon: k_i\mapsto \varepsilon_i k_i,\quad e_i\mapsto \varepsilon_i e_i,\quad f_i\mapsto f_i, \quad 0\le i\le n,
\end{align}
for any given $\varepsilon_i\in\{\pm 1\}$.
If $V$ is $\Uq(\g)$-module, we can twist it by $\iota_\varepsilon$ to obtain another module with the same underlying vector superspace but the twisted $\Uq(\g)$-action $\Uq(\g)\otimes V\longrightarrow V$ defined by
$
x\otimes v \mapsto  \iota_\varepsilon(x)v$ for all $x\in \Uq(\g) $ and $v\in V$.
If $k_i$ $(i=0,1,\dots n)$ act semi-simply on $V$, the eigenvalues of $k_i$ are multiplied by $\varepsilon_i$ in the twisted module.

Recall the notion of type-{\bf {1}} modules in the theory of ordinary quantum groups and quantum affine algebras.  For quantum supergroups and quantum affine superalgebras, a type-{\bf {1}} module over $\Uq(\g)$ is one such that the $k_i$ $(i=0,1,\dots n)$ act semi-simply with eigenvalues of the form $q_i^m$ for $m\in\Z$.
Any weight module over an ordinary quantum group or quantum affine algebra can be twisted into a type-{\bf {1}} module by analogues of the automorphisms \eqref{eq:auto-1}.  However, that is no longer true in the present context.
As we will see from Theorem \ref{theo-finite module}, some finite dimensional simple $\Uq(\g)$-modules have $k_n$-eigenvalues of the form $\pm\sqrt{-1}q^{m+1/2}$ with $m\in\Z$.  It is not possible to twist such modules into type-{\bf 1} by the automorphisms \eqref{eq:auto-1}.

For easy reference, we introduce the following definition.
\begin{defi}\label{def:type-s}
Call a $\Uq(\g)$-module type-{\bf {s}}, meaning spinoral type,  if all $k^{\pm1}_i$ act semi-simply with eigenvalues of the following form. If  $\g=\osp(1|2n)^{(1)}$ or $\Sl(1|2n)^{(2)}$, the eigenvalues of
$k_i$ for $0\le i< n$ belong to $\{q^j\mid j\in\Z\}$, and eigenvalues of $k_n$ to $\{\sqrt{-1}q^{j+1/2}\mid j\in\Z\}$.
If  $\g=\osp(2|2n)^{(2)}$, the eigenvalues of either
$k_0$, $k_n$, or both belong to $\{\sqrt{-1}q^{j+1/2}\mid j\in\Z\}$,
and the eigenvalues of the other $k_i$ to
$\{q^j\mid j\in\Z\}$.
\end{defi}

Type-{\bf s} modules exist  even for the quantum supergroup $\Uq(\osp(1|2))$ associated with $\osp(1|2)$.

\begin{example}[Type-{\bf s} representations of $\Uq(\osp(1|2))$]
The quantum supergroup $\Uq(\osp(1|2))$ is generated by $E, F$ and $K^{\pm1}$ with relations $K K^{-1}=K^{-1} K=1$ and
\[
K E K^{-1} = q E, \quad K F K^{-1} = q^{-1} F, \quad E F + F E = \frac{K-K^{-1}}{q-q^{-1}}.
\]
It has long been known that there exists an $\ell$-dimensional irreducible representation of $\Uq(\osp(1|2))$ for each positive integer $\ell$.  If $\ell$ is odd, the irreducible representation can be twisted into a type-{\bf 1} representation;  and  if $\ell$ is even, to a type-{\bf s} representation.

The smallest type-{\bf s} example is the $2$-dimensional irreducible representation, which is given by
\[
E\mapsto \begin{pmatrix}
0&\frac{\sqrt{-1}}{q^{1/2}-q^{-1/2}}\\
0&0 \end{pmatrix}, \quad
F\mapsto \begin{pmatrix} 0&0\\ 1&0 \end{pmatrix}, \quad  K\mapsto\begin{pmatrix}\sqrt{-1}q^{1/2}&0\\0&\sqrt{-1}q^{-1/2}\end{pmatrix}.
\]
\begin{rem}
The $2$-dimensional irreducible representation  of
$\Uq(\osp(1|2))$ does not have a classical limit, i.e., $q\to 1$ limit, nor do all the even dimensional irreducible representations.  This agrees with the fact that the finite dimensional irreducible representations of $\osp(1|2)$ are all odd dimensional.
\end{rem}
\end{example}

The quantum affine superalgebra $\Uq(\g)$ for all $\g$ in \eqref{eq:g} contains the quantum supergroup $\Uq(\osp(1|2))$ as a super subalgebra.  The type-{\bf s} representations of $\Uq(\g)$ restrict to type-{\bf s} representations of $\Uq(\osp(1|2))$.

\subsection{The Fock space}\label{sec:space}
Let $\ell(\alpha_i):=(\alpha_i,\alpha_i)$ for any simple root $\alpha_i$.
For convenience,
we choose the normalisation for the bilinear form so that $\ell(\alpha_n)=2$ if $\g=\osp(2|2n)^{(2)}$,  and $\ell(\alpha_n)=1$ otherwise. Let $\wp=(-1)^{1/\ell(\alpha_n)}q$, and take $\wp^{1/2}=(-1)^{\frac{1}{2\ell(\alpha_n)}}q^{1/2}$.

Hereafter we will always consider $\Uq(\g)$ in the Drinfeld realisation  given in Definition \ref{def:DR} and Lemma \ref{lem:dr-f}.
Denote by $\Uq(\widetilde{\eta})$
the subalgebra of $\Uq(\g)$ generated by the elements  $\gamma^{1/2}$, $\qk_i$ and $\h_{i,r}$ ($r\in\Z\backslash{\{0\}}$, $1\le i\le n$),
and by $\U_q(\eta)$ that generated $\gamma^{1/2}$ and $\h_{i,r}$  ($r\in\Z\backslash{\{0\}}$, $1\le i\le n$).
Let $S(\eta^{-})$ be the symmetric algebra generated by $\h_{i,r}$ for $r\in\Z_{<0}$ and $1\le i\le n$.  Let $H_i(s)$ ($s\in\Z\backslash\{0\}$, $1\le i\le n$) be the linear operators acting on $S(\eta^{-})$ such that
\begin{eqnarray}\label{eq:vo-H}
\begin{aligned}
&\HH_i(-s)=\text{derivation defined by}\\
&\phantom{HH_i(-s)} \HH_i(-s)(\h_{j,r})=\delta_{r,s} \dfrac{ u_{i,j,-s} (\wp^{s}-\wp^{-s})  }
                                                           { s   (q_i-q_i^{-1})(q_j-q_j^{-1})  }, \\
&\HH_i(s)=\text{multiplication by $\h_{i,s}$}, \qquad \forall r, s\in\Z_{<0},
\end{aligned}
\end{eqnarray}
where $u_{i,j,-s}$ is defined by \eqref{eq:u-def}. Then
\begin{align}\label{eq:vo-hh}
[\HH_{i}(r),\HH_{j}(s)]=\delta_{r+s,0}\dfrac{u_{i,j,r} (\wp^{r}-\wp^{-r})  }
                                  { r   (q_i-q_i^{-1})(q_j-q_j^{-1})  },
                                 \quad \forall r,s\in\Z\backslash\{0\}.
\end{align}
The algebra  $\U_q(\eta)$ has the canonical irreducible representation on $S(\eta^{-})$ given by
\[
\begin{aligned}
\ga \mapsto \wp, \quad \h_{i,s} \mapsto \HH_i(s), \quad \forall   s\in\Z\backslash\{0\}.
\end{aligned}
\]

Let $\dot{\g}\subset\g$ be the regular simple Lie sub-superalgebra with the Dynkin diagram obtained from the Dynkin diagram of $\g$ by removing the node corresponding to $\alpha_0$. Then $\dot{\g}=\osp(1|2n)$ in all three cases of $\g$. Let $\Q$ be the root lattice of $\dot{\g}$ with the bilinear form inherited from that of $\g$. We regard $\Q$ as a multiplicative group consisting of elements of the form $e^\alpha$ with $\alpha\in \Q$. Let $\C[\Q]$ be the group algebra of $\Q$. Given any variable $z$ and any root $\alpha$,  we define a linear operator on $\C[\Q]$ by
\begin{eqnarray}\label{eq:operator-on-CQ}
 z^{\alpha}. e^{\beta}=z^{(\alpha,\beta)}e^{\beta}.
\end{eqnarray}
We also define the linear operator $\sigma_i$ on $\C[\Q]$ for all $i=1, 2, \dots, n$  by
\[\begin{aligned}
\sigma_i. e^{\beta}=(-1)^{(\alpha_i,\beta)}e^{\beta}.
\end{aligned}\]
Write $\Phi_i=\prod_{k=i}^{n}\sigma_k$ for $1\le i\le n$ and $\Phi_i=1$ for $i>n$. It is easy to check that $\Phi_i.e^{\pm\alpha_j}=(-1)^{\delta_{i,j}+\delta_{i+1,j}}e^{\pm\alpha_j}$ for $1\le i,j \le n$ and $\Phi_i^2=1$.

We also need some basic knowledge of the $q$-deformed Clifford algebra $\cq$,
which is generated by $\ka(r),\ka(s)$ ($r,s\in\Z +\frac{1}{2}$) with relations
\begin{equation}\label{eq:kk}
\ka(r)\ka(s)+\ka(s)\ka(r)=\delta_{r,-s}(q^r+q^{s}), \quad \forall r, s.
\end{equation}
We use $\Lambda(\cq^{-})$ to denote the exterior algebra generated by $\ka(r)$ for $r<0$,  and denote by $\Lambda(\cq^{-})_0$ (resp. $\Lambda(\cq^{-})_1$) the subspace of even (resp. odd) degree, where $\ka(r)$ ($r<0$) are regarded as having degree $1$.  Define the linear operators $P (s)$ on $\Lambda(\cq^{-})$ such that for any $\psi, \phi\in \Lambda(\cq^{-})$,
\[
\begin{aligned}
&P(s)\cdot\psi = \ka(s)\psi, \quad P (-s)\cdot\ka(r)=\delta_{r,s}(q^r+q^{-r}), \quad P (-s)\cdot 1=0, \\
&P(-s)\cdot(\psi\phi)= P(-s)\cdot(\psi) \phi + (-1)^{deg(\psi)}\psi P(-s)\cdot(\phi), \quad \forall r, s<0.
\end{aligned}
\]
Then $\cq$ acts on $\Lambda(\cq^{-})$ by
$\ka(r)\mapsto K(r)$ for all $r\in\Z+\frac{1}{2}$.
Let
\begin{align}\label{eq:v}
W=\left\{
\begin{aligned}
&\C[\Q],  \quad
                      \g=\osp(1|2n)^{(1)},\osp(2|2n)^{(2)};\\
&\C[\Q_0]\otimes \Lambda(\mathcal{C}_{\wp}^{-})_0  \oplus \C[\Q_0]e^{\lambda_1}\otimes \Lambda(\mathcal{C}_{\wp}^{-})_1,   \quad
                      \g=\Sl(1|2n)^{(2)},
\end{aligned} \right.
\end{align}
where $\Q_0$ is the lattice spanned by the set of roots with squared length 2 and $\lambda_1=\alpha_1+\alpha_2+\dots+\alpha_n$.
Now we construct the vector space
$
V=S(\eta^{-})\otimes W.
$

%==========================================================%

\subsection{Construction of the vacuum representations}\label{sec-vo-0}
We start by defining
\begin{align*}
&P(z)=\sum_{s\in\Z +1/2} P(s)z^{-s}, \\
&T^{+}_i(z)=\begin{cases}
e^{\alpha_i} \Phi_i z^{\alpha_i+\ell(\alpha_i)/2}, \quad \text{if  }  \g=\osp(1|2n)^{(1)}, \osp(2|2n)^{(2)};\\
e^{\alpha_i}  \Phi_i  z^{\alpha_i+\ell(\alpha_i)/2} P(z), \quad \text{if  }  \g=\Sl(1|2n)^{(2)},
\end{cases}\\
&T^{-}_i(z)=\begin{cases}
e^{-\alpha_i} \Phi_{i+1} z^{-\alpha_i+\ell(\alpha_i)/2}, \quad \text{if  } \g=\osp(1|2n)^{(1)}, \osp(2|2n)^{(2)};\\
e^{-\alpha_i}  \Phi_{i+1}  z^{-\alpha_i+\ell(\alpha_i)/2}(- P(z)), \quad \text{if  }  \g=\Sl(1|2n)^{(2)},
\end{cases}
\end{align*}
and introducing the following formal distributions:
\begin{align*}
&E^{\pm}_{i}(z) =\xp\left(
        \pm\sum_{k=1}^{\infty}\frac{\wp^{\mp k/2}}{ \{k\}_{q_i} }\HH_i(-k)z^k
                                  \right),\\
&F^{\pm}_{i}(z) =\xp\left(
       \mp\sum_{k=1}^{\infty}\frac{\wp^{\mp k/2}}{ \{k\}_{q_i}}\HH_i(k)z^{-k}
                                 \right),
\end{align*}
where $\{k\}_{q_i}=[k]_{\wp}\cdot \frac{\wp-\wp^{-1}}{q_i-q_i^{-1}}=\frac{\wp^k-\wp^{-k}}{q_i-q_i^{-1}}$.
Using them, we define linear operators $\X^{\pm}_j(k)$ ($1\le j\le n$,   $k\in\Z$) on the vector space $V$ by
\begin{align}\label{eq:vo}
\X^{\pm}_{i}(z)=E^{\pm}_{i}(z)F^{\pm}_{i}(z)T^{\pm}_i(z), \quad i=1, 2, \dots, n,
\end{align}
where
\[
\begin{aligned}
\X^{\pm}_i(z)&=\sum_{k\in\Z} \X^{\pm}_i(k) z^{-k}, \ \qquad \text{for all $i\ne n$,}\\
\X^{\pm}_n(z)&=\sum_{k\in\Z} \X^{\pm}_n(k) z^{-k}, \ \qquad \text{if  $\g\ne\osp(1|2n)^{(1)}$,}\\
\X^{\pm}_n(z)&=\sum_{k\in\Z} \X^{\pm}_n(k) z^{-k+1/2},  \quad \text{ if $\g=\osp(1|2n)^{(1)}$}.
\end{aligned}
\]

%%
%%\begin{comment}
\iffalse
\begin{align}\label{eq:vo}
\X^{\pm}_{i}(z) &=\xp\left(
        \pm\sum_{k=1}^{\infty}\frac{\wp^{\mp k/2}}{ \{k\}_{q_i} }\HH_i(-k)z^k
                                  \right)
                            \xp\left(
       \mp\sum_{k=1}^{\infty}\frac{\wp^{\mp k/2}}{ \{k\}_{q_i}}\HH_i(k)z^{-k}
                                 \right)
                            T^{\pm}_i(z),
\end{align}
where $\{k\}_{q_i}=[k]_{\wp}\cdot \frac{\wp-\wp^{-1}}{q_i-q_i^{-1}}=\frac{\wp^k-\wp^{-k}}{q_i-q_i^{-1}}$.
\fi
%%\end{comment}

We have the following result.
\begin{theo}\label{them:v.o}
The quantum affine superalgebra $\Uq(\g)$ acts irreducibly
on the vector space  $V$, with the action defined by
\begin{eqnarray}\label{eq:action}
\begin{aligned}
&\ga^{1/2}\mapsto \wp^{1/2},\ \ \qk_i^{1/2}\mapsto (\varpi_i\sigma_i \wp^{\alpha_i})^{1/2},
\ \  \h_{i,s}\mapsto \HH_i(s),\\
&\e_{i,k}\mapsto \X^{+}_i(k),\ \ \f_{i,k}\mapsto \varrho_i\X^{-}_i(k),\\
&\forall i=1, 2, \dots, n, \ \ s\in\Z\backslash\{0\}, \ \ k\in\Z,
\end{aligned}
\end{eqnarray}
where
\[\begin{aligned}
& \varpi_i=\begin{cases}
  \wp^{-1/2}, &\text{if \  } \g=\osp(1|2n)^{(1)}, \ i= n;\\
   1,               & \text{otherwise};
             \end{cases} \\
&\varrho_i=\begin{cases}
 -2^{-1}\{\ell(\alpha_i)/2\}_{q_i}, &\text{if \  } \g=\osp(2|2n)^{(2)}, \ i\neq n;\\
 -\{\ell(\alpha_i)/2\}_{q_i},           & \text{otherwise}.
                \end{cases}
\end{aligned}\]
\end{theo}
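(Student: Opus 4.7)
The plan is to verify that the prescribed operators satisfy all the defining relations in the current presentation of $\Dr_q(\g)$ given in Lemma \ref{lem:dr-f}, and then to prove irreducibility by a cyclicity argument. The Heisenberg piece is automatic: the assignment $\ga\mapsto\wp$, $\h_{i,s}\mapsto\HH_i(s)$ reproduces \eqref{eq:hh} because \eqref{eq:vo-hh} holds by construction, and the formal distributions $E_i^\pm(z)$, $F_i^\pm(z)$ are designed precisely so that the mixed relations \eqref{eq:hx} follow from standard Heisenberg calculus. The action of $\qk_i^{1/2}$ via $\varpi_i\sigma_i\wp^{\alpha_i}$ together with the cocycle $\Phi_i$ built into $T_i^{\pm}(z)$ will yield the grading relation $\qk_i\ef_j(z)\qk_i^{-1}=q_i^{\pm a_{ij}}\ef_j(z)$; the sign $\varpi_i$ is inserted to absorb the half-integer shift in $\e_n(z)$ for $\g=\osp(1|2n)^{(1)}$.

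The main computations then break into two families. First, the commutation $[\e_i(z),\f_j(w)]$ must equal the right-hand side of \eqref{eq:xx-f}. I would compute the product $\X_i^+(z)\X_j^-(w)$ by the usual reordering procedure: move the $F$-part of $\X_i^+(z)$ past the $E$-part of $\X_j^-(w)$, pick up the scalar factor determined by \eqref{eq:vo-hh}, and then handle the lattice factors $e^{\pm\alpha_i}z^{\pm\alpha_i}$ using \eqref{eq:operator-on-CQ}. The difference of the two orderings will be supported on poles at $w=z\wp^{\pm 1}$; expanding the resulting rational function as a delta distribution produces exactly the $\psi_i(z\ga^{-1/2})$ and $\varphi_i(z\ga^{1/2})$ terms after identifying $\psi_i(z)$, $\varphi_i(z)$ with the generating series \eqref{eq:hh-hat}. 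The scalar $\varrho_i$ is precisely the normalization needed to match $(q_i-q_i^{-1})^{-1}$ on the right side of \eqref{eq:xx-f}, and in the $\Sl(1|2n)^{(2)}$ case the additional factor $-P(z)$ in $T_n^-(z)$ combines with $P(z)$ in $T_n^+(w)$ via \eqref{eq:kk} to reproduce the sign structure $u_{n,n,r}$ of \eqref{eq:u-def}. Second, the Serre relations \ref{lem:dr-f}(A)--(E) must be verified. Each one reduces, after normal ordering, to an identity of the form $\sum_k (-1)^k \binom{\ell}{k}_{q}\, g(z_{\sigma(1)},\dots,z_{\sigma(\ell)},w)=0$ where $g$ is a rational function with known poles; such identities are standard consequences of the $q$-binomial theorem together with the symmetry $sym$ that kills the relevant antisymmetric combinations.

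The main obstacle is expected to be the higher Serre relations of type (D) for $\g=\osp(1|2n)^{(1)}$, since $\e_n(z)$ involves half-integer powers, the coefficient field contains $\sqrt{-1}$, and the relations are cubic with mixed brackets $[\,,\,]_{q_n^2}$ and $[\,,\,]_{q_n^4}$. Here one must track carefully the interplay between the $q$-shifted OPE factors $g_{nn}(z/w)$ (whose numerator has a double zero) and the lattice cocycle signs $\Phi_i^2=1$, and show that after symmetrization in $r_1,r_2,r_3$ (or in $r,s$) all surviving contributions cancel. A uniform way to organize this is to first establish the operator identity
\[
\X_n^\pm(z_1)\X_n^\pm(z_2)=\frac{(z_1-\wp^{\pm 2}z_2)(z_1-\wp^{\mp 1}z_2)}{(\wp^{\pm 2}z_1-z_2)(\wp^{\mp 1}z_1-z_2)}\,\X_n^\pm(z_2)\X_n^\pm(z_1),
\]
from which the symmetrized cubic relations follow by clearing denominators and using the $sym$ operation.

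For irreducibility, the vacuum vector $v_0=1\otimes 1\in V$ is annihilated by all $\HH_i(s)$, $\X_i^+(k)$ with $s,k>0$ and is an eigenvector of the Cartan operators. A standard argument shows that $V$ is cyclically generated by $v_0$: successive applications of $\X_i^-(k)$ for $k\le 0$ produce all lattice shifts $e^\beta$ (and all Clifford vectors in the $\Sl(1|2n)^{(2)}$ case), while applications of $\HH_i(-s)$ generate $S(\eta^-)$ freely by \eqref{eq:vo-H}. Conversely, any nonzero submodule must contain a highest weight vector, and using the explicit commutation $[\X_i^+(k),\X_i^-(-k)]$ computed above together with the fact that $\U_q(\eta)$ acts irreducibly on $S(\eta^-)$ and the cocycle-Clifford action on $W$ is irreducible as a module over the algebra generated by $T_i^\pm$, one concludes that the only such submodule is $V$ itself.
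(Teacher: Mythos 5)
Your proposal follows essentially the same route as the paper: verify the current-algebra relations of Lemma \ref{lem:dr-f} by computing normal-ordered products, extract the $\delta$-distributions supported at $w=\wp^{\pm1}z$ to recover \eqref{eq:xx-f} with $\psi_i,\varphi_i$ realised through \eqref{eq:hh-hat}, reduce the Serre relations to rational-function identities after normal ordering, and deduce irreducibility from the irreducibility of the Heisenberg module $S(\eta^-)$ and of the lattice/Clifford module $W$ (the paper simply cites Jing for the latter two facts rather than running the cyclicity argument you sketch, but the content is the same).

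One caveat on your treatment of the type (D) relations for $\osp(1|2n)^{(1)}$. The quadratic exchange identity you propose cannot hold literally as an identity of formal distributions (the two sides are expansions of the contraction function in opposite regions); what holds is the cleared form $p(z_1,z_2)\X_n^{\pm}(z_1)\X_n^{\pm}(z_2)=p(z_2,z_1)\X_n^{\pm}(z_2)\X_n^{\pm}(z_1)$ for a suitable polynomial $p$. From that cleared relation one can only conclude that $p(z_1,z_2)p(z_1,z_3)p(z_2,z_3)$ times the symmetrised cubic expression vanishes, and multiplication by a polynomial is not injective on formal distributions (it kills $\delta$-supported parts), so the cubic relations do \emph{not} follow "by clearing denominators." This is consistent with the fact that in Definition \ref{def:DR} the quadratic relation \eqref{eq:xrs-xsr} is explicitly excluded for $(\g,i,j)=(\osp(1|2n)^{(1)},n,n)$ and replaced by the genuinely cubic relations (D). The correct verification, which your second paragraph already describes and which is what the paper (following Jing) does, is to expand each triple product as the common normal-ordered product times explicit pairwise contraction functions and check that the resulting sum of region-expansions of rational functions vanishes as a distribution; you should carry that out for (D) rather than rely on the exchange-relation shortcut.
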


\begin{proof}
The irreducibility of $V$ as a $\Uq(\g)$-module follows from the fact that the $\Uq(\eta)$-module $S(\eta^{-})$ and $\Uq(\widetilde{\eta})$-module  $W$ are both irreducible. This was proved in \cite{Jn96}.
Thus the proof of the theorem essentially boils down to verifying that the operators
$ \HH_i(k)$ and $\X^{\pm}_i(k)$ satisfy the commutation relations of
$\h_{i,k}$ and $\ef_{i,k}$. We show this by using  calculus of
 formal distributions.

Consider the vertex operators \eqref{eq:vo} in the case of  $\g=\osp(1|2n)^{(1)}$.
We claim that they satisfy the following relation (cf. \eqref{eq:xx-f}):
\begin{eqnarray}\label{eq:vo-XX}
\begin{aligned}
&[\X^{+}_{i}(z), \X^{-}_{j}(w)]
=\frac{\delta_{i j}\,\rho_{z,w}\,\varrho^{-1}_i}{q_i-q_i^{-1}}
     \left\{\sigma_i \varpi_i \wp^{\alpha_i} \widetilde{V}_i^+(z)
                  \delta\left(\wp^{-1}\frac{z}{w}\right)
      \right.\\
&\hspace{28mm} \left.
     -\sigma_i \varpi_i \wp^{-\alpha_i}
\widetilde{V}_i^-(z)
       \delta\left(\wp\frac{z}{w}\right) \right\},
\end{aligned}
\end{eqnarray}
where
\begin{eqnarray}\label{eq:widetildeV}
\begin{aligned}
&\widetilde{V}_i^+(z)=\xp\left(
            \sum_{k=1}^{\infty}   (q_i-q_i^{-1})    \HH_i(k)(z\wp^{-1/2})^{-k}     \right), \\
&\widetilde{V}_i^-(z)=\xp\left( \sum_{k=1}^{\infty}(q_i^{-1}-q_i)\HH_i(-k)(z\wp^{1/2})^{k}\right).
\end{aligned}
\end{eqnarray}

If $(\alpha_i,\alpha_j)=0$ (necessarily $i\ne j$), the claim is clear.

If  $(\alpha_i,\alpha_j)\neq 0$, there are three possibilities:
$(\alpha_i,\alpha_j)=-1$ with $i\ne j$,  and $(\alpha_i,\alpha_j)=1$ or $2$ with $i= j$.
%%
%%
\iffalse
To prove the claim in these cases, we introduce
\begin{align*}
&E^{\pm}_{i}(z) =\xp\left(
        \pm\sum_{k=1}^{\infty}\frac{\wp^{\mp k/2}}{ \{k\}_{q_i} }\HH_i(-k)z^k
                                  \right),\\
&F^{\pm}_{i}(z) =\xp\left(
       \mp\sum_{k=1}^{\infty}\frac{\wp^{\mp k/2}}{ \{k\}_{q_i}}\HH_i(k)z^{-k}
                                 \right).
\end{align*}
Then the vertex operators can be rewritten as
\[
\X^{\pm}_{i}(z)=E^{\pm}_{i}(z)F^{\pm}_{i}(z)T^{\pm}_i(z).
\]
\fi
%%

Define normal ordering as usual by placing $\HH_i(-k)$ with $k>0$ on the left of
$\HH_j(k)$, $\exp^\alpha$ on the left of $z^\beta$, and $K(-s)$ with $s>0$ on the left of $K(s)$, where for the $K(r)$'s an order change procures a sign.
Let
%\begin{eqnarray}%\label{eq:normal order-T}
\begin{align*}
:T^{+}_i(z)T^{-}_j(w):&=e^{ \alpha_i-\alpha_j} \Phi_i\Phi_{j+1}  z^{ \alpha_i}w^{-\alpha_j},
\end{align*}
then we have the following relations:  if $(\alpha_i,\alpha_j)\neq 1$,
\begin{align*}
:T_i^+(z)T_j^-(w):&=(-1)^{\delta_{i-1,j}+\delta_{i,j}}  T_i^+(z)T_j^-(w)  z^{(\alpha_i,\alpha_j)}   z^{-\ell(\alpha_i)/2}   w^{-\ell(\alpha_j)/2}\\
&=(-1)^{\delta_{i-1,j}+\delta_{i,j}}  T_j^-(w)T_i^+(z) w^{(\alpha_i,\alpha_j)} z^{-\ell(\alpha_i)/2}  w^{-\ell(\alpha_j)/2};
\end{align*}
if $(\alpha_i,\alpha_j)= 1$,
\begin{align*}
:T_i^+(z)T_j^-(w):&=-T_i^+(z)T_j^-(w)  z^{(\alpha_i,\alpha_j)} z^{-\ell(\alpha_i)/2}  w^{-\ell(\alpha_j)/2}\\
&=T_j^-(w)T_i^+(z) w^{(\alpha_i,\alpha_j)} z^{-\ell(\alpha_i)/2}  w^{-\ell(\alpha_j)/2}.
\end{align*}
Also
\begin{eqnarray}\label{eq:normal order-X}
\begin{aligned}
:\X^{+}_{i}(z)\X^{-}_{j}(w):=E^{+}_{i}(z)E^{\pm}_j(w)F^{+}_{i}(z)F^{-}_{j}(w):T^{+}_i(z)T^{-}_j(w):.
\end{aligned}
\end{eqnarray}
Thus $\X^{-}_{j}(w)\X^{+}_{i}(z)$ can be expressed as
\begin{align*}
:\X^{+}_{i}(z)\X^{-}_{j}(w):  \xp \left( \sum_{k=1}^{\infty} \frac{u_{i,j,k}}{k(\wp^k-\wp^{-k})}z^{-k}w^k \right)z^{(\alpha_i,-\alpha_j)}z^{\ell(\alpha_i)/2}w^{\ell(\alpha_j)/2},
\end{align*}
where we have used the Baker-Campbell-Hausdorff formula.

Let  $\delta_1(x)=\sum_{n\le 0}(\wp^{-n}-\wp^{n})x^n$. Then direct computation shows that
$\X^{+}_{i}(z)\X^{-}_{j}(w)$ can be expressed as
\begin{align*}
%\X^{+}_{i}(z)\X^{-}_{j}(w)=\begin{cases}
  & :\X^{+}_{i}(z)\X^{-}_{j}(w): (-1)^{\delta_{i-1,j}}  (z+w)\, z^{\ell(\alpha_i)/2}w^{\ell(\alpha_j)/2} , \quad \text{if \ }  (\alpha_i,\alpha_j)=-1,\\
  & :\X^{+}_{i}(z)\X^{-}_{j}(w): \frac{1}{\wp-\wp^{-1}}\delta_1(z/w),  \quad \text{if \ }  (\alpha_i,\alpha_j)=2,\\
  & :\X^{+}_{i}(z)\X^{-}_{j}(w): \frac{1}{\wp-\wp^{-1}} \delta_1(z/w)\, (z+w)(zw)^{-1/2}, \quad \text{if \ }  (\alpha_i,\alpha_j)=1,
%\end{cases}
\end{align*}
where we have used the formula $\text{ln}(1-x)=-\sum_{n=1}^{\infty}\frac{x^n}{n}$.
Note that $z^{\pm 1/2}$ and $w^{\pm 1/2}$ may appear in $\X^{+}_{i}(z)\X^{-}_{j}(w)$.
A similar computation shows that
\begin{itemize}
\item if $(\alpha_i,\alpha_j)=-1$,
\begin{align*}
 \X^{-}_{j}(w)\X^{+}_{i}(z)=:\X^{+}_{i}(z)\X^{-}_{j}(w): (-1)^{\delta_{i-1,j}} (z+w)\, z^{\ell(\alpha_i)/2}w^{\ell(\alpha_j)/2},
\end{align*}

\item if $ (\alpha_i,\alpha_j)=2,$
\begin{align*}
\X^{-}_{j}(w)\X^{+}_{i}(z)=:\X^{+}_{i}(z)\X^{-}_{j}(w): \frac{1}{\wp-\wp^{-1}}\,\delta_1(w/z),
\end{align*}

\item if $(\alpha_i,\alpha_j)=1, $
\begin{align*}
&&\X^{-}_{j}(w)\X^{+}_{i}(z)=:\X^{+}_{i}(z)\X^{-}_{j}(w): \frac{1}{\wp^{-1}-\wp}\, \delta_1(w/z)\, (z+w)(zw)^{-1/2}.
\end{align*}
\end{itemize}
Using these we obtain
\begin{align*}
&[\X^{+}_{i}(z),\X^{-}_{j}(w)]=\X^{+}_{i}(z) \X^{-}_{j}(w)-(-1)^{[\alpha_i][\alpha_j]}\X^{-}_{j}(w) \X^{+}_{i}(z)\\
=&\begin{cases}
:\X^{+}_{i}(z)\X^{-}_{j}(w):\frac{(z+w)(zw)^{-1/2}}{\wp-\wp^{-1}}\left(\delta(\wp^{-1}z/w)-\delta(\wp z/w)\right),
                     &(\alpha_i,\alpha_j)=1,\\
:\X^{+}_{i}(z)\X^{-}_{j}(w):\frac{1}{\wp-\wp^{-1}}\left(\delta(\wp^{-1}z/w)-\delta(\wp z/w)\right),
                     &(\alpha_i,\alpha_j)=2,\\
0,                  &(\alpha_i,\alpha_j)=-1,
\end{cases}
\end{align*}
where $[\alpha_i]=0$ if $\alpha_i$ is an even root, and 1 otherwise. This in particular shows that  \eqref{eq:vo-XX} holds  for all $i\ne j$.

In the cases with $i=j$, by using  $f(z,w)\delta(\frac{w}{z})=f(z,z)\delta(\frac{w}{z})$, we obtain
\begin{align*}
&:\X^{+}_{i}(z)\X^{-}_{j}(w): \delta\left(\wp^{-1}\frac{z}{w}\right)
=-\sigma_i \wp^{\alpha_i}
        \widetilde{V}_i^+(z)
         \delta\left(\wp^{-1}\frac{z}{w}\right), \\
&:\X^{+}_{i}(z)\X^{-}_{j}(w): \delta\left(\wp \frac{z}{w}\right)
=-\sigma_i \wp^{-\alpha_i}
\widetilde{V}_i^-(z)
       \delta\left(\wp\frac{z}{w}\right),
\end{align*}
where $\widetilde{V}_i^+(z)$ and $\widetilde{V}_i^-(z)$ are defined by \eqref{eq:widetildeV}. Note that
\[
\begin{aligned}
&(z+w)(zw)^{-1/2}\delta\left(\wp^{\pm1}\frac{z}{w}\right)=(z/w)^{1/2}(1+\wp^{\pm1})\delta\left(\wp^{\pm1}\frac{z}{w}\right).
\end{aligned}
\]
These formulae immediately lead to \eqref{eq:vo-XX}.

To consider the Serre relations, we take as an example the relation \eqref{eq:xrs-xsr-f} when $(\alpha_i,\alpha_j)=-1$. In this case, \eqref{eq:xrs-xsr-f} is equivalent to
\begin{align*}
   (z-q^{-1}w)\e_i(z)\e_j(w)
=(q^{-1}z-w)\e_j(w)\e_i(z).
\end{align*}
Thus, we need to show
\begin{align}\label{eq:XX}
 (z-q^{-1}w)\X^{+}_{i}(z)\X^{+}_{j}(w)
=(q^{-1}z-w)\X^{+}_{j}(w)\X^{+}_{i}(z).
\end{align}

Let $:T^{+}_i(z)T^{+}_j(w):=e^{ \alpha_i+\alpha_j} \Phi_i\Phi_{j} z^{ \alpha_i}w^{\pm\alpha_j},$ and
\[\begin{aligned}
:\X^{+}_{i}(z)\X^{+}_{j}(w):=E^{+}_{i}(z)E^{+}_j(w)F^{+}_{i}(z)F^{+}_{j}(w):T^{+}_i(z)T^{+}_j(w):.
\end{aligned}\]
By \eqref{eq:normal order-X}, $\X^{+}_{i}(z)\X^{+}_{j}(w)$ is equal to
\begin{align*}
&:\X^{+}_{i}(z)\X^{+}_{j}(w):
\xp\left[    -\sum_{k=1}^{\infty} \frac {   \wp^{-k}} {  \{k\}_{q_i}\{k\}_{q_j}  }  [\HH_i(k),\HH_j(-k)]
               \left( \frac{w}{z} \right)^k     \right] z^{-1} z^{\ell(\alpha_i)} w^{\ell(\alpha_j)},
               \end{align*}
which can be simplified to
 $:\X^{+}_{i}(z)\X^{+}_{j}(w): \left(1-q^{-1}\frac{w}{z}\right)^{-1} z^{-1} z^{\ell(\alpha_i)} w^{\ell(\alpha_j)}.
$
Thus
\[
\X^{+}_{i}(z)\X^{+}_{j}(w) =:\X^{+}_{i}(z)\X^{+}_{j}(w): (-1)^{\delta_{i-1,j}} \left(z-q^{-1}w\right)^{-1}  z^{\ell(\alpha_i)} w^{\ell(\alpha_j)}.
\]
Similarly we can show that
\begin{align*}
\X^{+}_{j}(w)\X^{+}_{i}(z)=:\X^{+}_{i}(z)\X^{+}_{j}(w): (-1)^{\delta_{i,j-1}} \left(w-q^{-1}z\right)^{-1} z^{\ell(\alpha_i)} w^{\ell(\alpha_j)}.
\end{align*}
Note that $i=j-1$ or $j+1$ in this case. Then two relations above immediately imply \eqref{eq:XX}.

Similar computation proves the theorem for the other $\g$.
\end{proof}

\begin{rem}
The representations in Theorem \ref{them:v.o} are not of type-{\bf 1}.
Note in particular that $\gamma$ acts by $\wp$. However, we can twist them into type-{\bf 1} or type-{\bf s} representations (see Definition \ref{def:type-s}) by the automorphisms  \eqref{eq:auto-1}.
\end{rem}

%============================================%

\subsection{Construction of the other level $1$ irreducible representations}\label{sect:other-level-1}
We now consider the vertex operator construction for the other level $1$ irreducible integrable highest weight representations with respect to the standard triangular
decomposition \eqref{eq:triangular-1}.
Observe that for  $\Uq(\osp(1|2n)^{(1)})$, the vacuum representation is the only such representation. Thus we will consider $\Uq(\g)$ for
$\g=\Sl(1|2n)^{(2)}$ and $\osp(2|2n)^{(2)}$ only.
We will only state the main results;  their proofs are quite similar to those in Section \ref{sec-vo-0}.

We maintain the notation of Section \ref{sec-vo-0}.

\subsubsection{The case of  $\Uq(\Sl(1|2n)^{(2)})$ }
There is only one irreducible integrable highest weight representation at level 1 beside the vacuum representation. It can be constructed as follows.

Recall the definition of $W$ in  \eqref{eq:v}.  Let $\lambda_n$ be the fundamental weight of $\dot{\g}$ corresponding to $\alpha_n$, and consider the subset
$\lambda_n+\Q$ of the weight lattice of $\dot{\g}$.  The linear operators $z^{\alpha}$ defined by \eqref{eq:operator-on-CQ} act on the group algebra of the weight lattice of
$\dot{\g}$ in the obvious way.
Denote $W_n=e^{\lambda_n}\C[\Q]$ and $V_n=S(\eta^{-})\otimes W_n$. Then $V_n$ is the level $1$ simple $\Uq(\Sl(1|2n)^{(2)})$-module with the action give by \eqref{eq:action} in terms of vertex operators. The highest weight vector is $1\otimes e^{\lambda_n}$.

\subsubsection{The case of $\Uq(\osp(2|2n)^{(2)})$ }
There are another two simple integrable highest weight modules at level $1$,  respectively associated with the fundamental weights $\lambda_1$ and $\lambda_n$ of $\dot{\g}$. Here $\lambda_1$ and $\lambda_n$ correspond to $\alpha_1$ and $\alpha_n$ respectively.  To construct the representations, we need the following q-deformed Clifford algebra $\cqq$, which is generated by $\ta(r),\ta(s)$ ($r,s\in\Z$) with relations
\begin{equation}\label{eq:tt}
\ta(r)\ta(s)+\ta(s)\ta(r)=\delta_{r,-s}(q^r+q^{s}), \quad \forall r, s.
\end{equation}
These are q-deformed Ramond fermionic operators.
Similar to Section \ref{sec:space}, we define linear operators $T (s)$ acting on $\Lambda(\cqq^{-})$ such that for any $\psi, \phi\in \Lambda(\cqq^{-})$,
\[
\begin{aligned}
&T(s)\cdot\psi = \ta(s)\psi, \quad T (-s)\cdot\ka(r)=\delta_{r,s}(q^r+q^{-r}), \quad T (-s)\cdot 1=0, \\
&T(-s)\cdot(\psi\phi)= T(-s)\cdot(\psi) \phi + (-1)^{deg(\psi)}\psi T(-s)\cdot(\phi), \quad \forall r, s<0,
\end{aligned}
\]
and $T(0)$ acts as the identity.

We replace $P(z)$ in Section \ref{sec-vo-0} by
$
P(z)=\sum_{s\in\Z} T(s)z^{-s}
$
and use it in \eqref{eq:vo} to obtain the corresponding vertex operators.
Now define
\[\begin{aligned}
&V^{(1)}=S(\eta^{-})\otimes W^{(1)} \quad \text{and} \quad V^{(n)}=S(\eta^{-})\otimes W^{(n)} \ \text{ with} \\
&W^{(1)}=e^{\lambda_1}\C[\Q_0]\otimes \Lambda(\mathcal{C}_{\wp}^{-})_0 \oplus \C[\Q_0]\otimes \Lambda(\mathcal{C}_{\wp}^{-})_1,\\
&W^{(n)}=e^{\lambda_n}\C[\Q]\otimes \Lambda(\mathfrak{C}_{\wp}^{-}).
\end{aligned}\]
Then $V^{(1)}$ and $V^{(n)}$ are the simple $\Uq(\osp(2|2n)^{(2)})$-modules at level $1$  with the actions formally given by \eqref{eq:action} but in terms of the new vertex operators.  The highest weight vectors are $1\otimes e^{\lambda_1}\otimes 1$ and $1\otimes e^{\lambda_n}\otimes 1$ respectively.

%=====================================================%

\subsection{Another construction of vacuum representations}\label{sect:other-vacuum}
For the quantum affine superalgebras
$\Uq(\osp(1|2n)^{(1)})$ and $\Uq(\Sl(1|2n)^{(2)})$, it is possible to modify the
vertex operators of the vacuum representations to make $\gamma$ act by $q$,
and this is what we will do in this section. The modified vertex operator representation of $\Uq(\Sl(1|2n)^{(2)})$ given here is of type-{\bf 1}.

For both affine superalgebras,  we choose in this section the normalisation for the bilinear form on the weight space so that $(\alpha_n,\alpha_n)=1$.

Recall the definitions of  $ \U_q(\eta)$ and $S(\eta^{-})$ in section \ref{sec:space}.  Let us now define new linear operators acting on $S(\eta^{-})$, denoted by
$H_i^q(s)$ with $s\in\Z\backslash\{0\}$, $1\le i\le n$,  as follows.
\begin{eqnarray}\label{eq:vo-Hp}
\begin{aligned}
&\HH_i^q(-s)=\text{derivation defined by}\\
&\phantom{HH_iq(-s)} \HH_i^q(-s)(\h_{j,r})=\delta_{r,s} \dfrac{ u_{i,j,-s} (q^{s}-q^{-s})  }
                                                           { s   (q_i-q_i^{-1})(q_j-q_j^{-1})  }, \\
&\HH_i^q(s)=\text{multiplication by $\h_{i,s}$}, \qquad \forall r, s\in\Z_{<0},
\end{aligned}
\end{eqnarray}
where $u_{i,j,-s}$ is defined by \eqref{eq:u-def}.  This differs from
\eqref{eq:vo-H} in that $\wp$  is replaced by $q$. Now we have
\begin{align}\label{eq:vo-hh-q}
[\HH^q_{i}(r),\HH^q_{j}(s)]=\delta_{r+s,0}\dfrac{u_{i,j,r} (q^{r}-q^{-r})  }
                                  { r   (q_i-q_i^{-1})(q_j-q_j^{-1})  },
                                 \quad \forall r,s\in\Z\backslash\{0\},
\end{align}
and we obtain the following irreducible $\U_q(\eta)$-representation on $S(\eta^{-})$
\[
\begin{aligned}
\ga \mapsto q, \quad \h_{i,s} \mapsto \HH_i(s), \quad \forall   s\in\Z\backslash\{0\}.
\end{aligned}
\]

Define the $2$-cocycle $C:\Q \times \Q \to \{\pm 1\}$, satisfying
\[
\begin{aligned}
         C(\alpha+\beta,\ga )=C(\alpha,\ga)C(\beta,\ga),
\quad C(\alpha,\beta+\ga)=C(\alpha,\beta)C(\alpha,\ga),
\quad \forall \alpha, \beta, \ga,
\end{aligned}
\]
such that $ C(0, \beta)=C(\alpha, 0)=1$, and for  any simple roots $\alpha_i$ and $\alpha_j$,
\[\begin{aligned}
C(\alpha_i,\alpha_j)=\begin{cases}
(-1)^{(\alpha_i,\alpha_j)+(\alpha_i,\alpha_i)(\alpha_j,\alpha_j)}, & i\le j,\\
1,& i>j.
\end{cases}
\end{aligned}\]
Obviously,  $\Q$ has a unique central extension $\hQ$,
\[\begin{aligned}
1\rightarrow  \Z_2 \rightarrow   \hQ  \rightarrow \Q\rightarrow 1
\end{aligned}\]
defined in the following way.  We regard $\hQ$ as a multiplicative group consisting of elements $\pm e^\alpha$ with $\alpha\in \Q$. Then $(-1)^a e^\alpha (-1)^b e^\beta=(-1)^{a+b}C(\alpha,\beta)e^{\alpha+\beta}$, where $a, b\in\{0, 1\}$ and $\alpha, \beta\in \Q$.
Let $\C[\hQ]$ be the group algebra of  $\hQ$, and let $J$ be the two-sided ideal generated by $e^\alpha +(-e^\alpha)$ for all $\alpha$. Denote the quotient $\C[\hQ]/J$ by $\C[\Q]$.
Now $\pm e^\alpha\in\C[\hQ]$ are natural linear operators acting on $\C[\Q]$. The linear operators $z^{\alpha}$ are the same as in section \ref{sec:space}.

Let $V=S(\eta^{-})\otimes W$, where $W$ is defined in \eqref{eq:v}.

For all $i=1, \dots, n$, let
\begin{align*}
\widetilde{T}^{\pm}_i(z)=\begin{cases}
e^{\pm\alpha_i} z^{\pm\alpha_i+\ell(\alpha_i)/2}, & \g=\osp(1|2n)^{(1)};\\
e^{\pm\alpha_i} z^{\pm\alpha_i+\ell(\alpha_i)/2}(\pm K(z)), & \g=\Sl(1|2n)^{(2)}.
\end{cases}
\end{align*}
Define
\[
\begin{aligned}
&\widetilde{E}^{\pm}_{i}(z)=\xp\left(
        \pm\sum_{k=1}^{\infty}\frac{q^{\mp k/2}}{ [k]_{q_i} }\HH^q_i(-k)z^k
                                  \right), \\
&\widetilde{F}^{\pm}_{i}(z)=\xp\left(
       \mp\sum_{k=1}^{\infty}\frac{q^{\mp k/2}}{ [k]_{q_i}}\HH^q_i(k)z^{-k}
                                 \right), \quad \text{for $i\ne n$}; \\
&\widetilde{E}^{\pm}_n(z)=\xp\left(
        \pm\sum_{k=1}^{\infty}\frac{q^{\mp k/2}}{ [2k]_{q_n} }\HH^q_n(-k)z^k
                                  \right), \\
&\widetilde{F}^{\pm}_n(z)=\xp\left(
       \mp\sum_{k=1}^{\infty}\frac{q^{\mp k/2}}{ [2k]_{q_n}}\HH^q_n(k)z^{-k}
                                 \right),
\end{aligned}
\]
and finally set
\begin{align}\label{eq:vo-p}
\widetilde{\X}^{\pm}_{i}(z) &=\widetilde{E}^{\pm}_{i}(z) \widetilde{F}^{\pm}_{i}(z)
                            \widetilde{T}^{\pm}_i(z), \quad \forall i.
\end{align}
\iffalse
\begin{align}\label{eq:vo-p}
\X^{\pm}_{i}(z) &=\xp\left(
        \pm\sum_{k=1}^{\infty}\frac{q^{\mp k/2}}{ [k]_{q_i} }\HH^q_i(-k)z^k
                                  \right)
                            \xp\left(
       \mp\sum_{k=1}^{\infty}\frac{q^{\mp k/2}}{ [k]_{q_i}}\HH^q_i(k)z^{-k}
                                 \right)
                            T^{\pm}_i(z),
\end{align}
 for $i\neq n$, and
\begin{align}\label{eq:vo-p-n}
\X^{\pm}_{n}(z) &=\xp\left(
        \pm\sum_{k=1}^{\infty}\frac{q^{\mp k/2}}{ [2k]_{q_n} }\HH^q_n(-k)z^k
                                  \right)
                            \xp\left(
       \mp\sum_{k=1}^{\infty}\frac{q^{\mp k/2}}{ [2k]_{q_n}}\HH^q_n(k)z^{-k}
                                 \right)
                            T^{\pm}_n(z).
\end{align}
\fi
%
Similar arguments as those in the proof of Theorem \ref{them:v.o} can prove the following result.
\begin{theo}\label{them:v.o-q}
Let $\g$ be $\osp(1|2n)^{(1)}$ or $\Sl(1|2n)^{(2)}$. Then the quantum affine superalgebra $\Uq(\g)$ acts irreducibly
on the vector space  $V$ with the action defined by
\begin{eqnarray}\label{eq:action-1}
\begin{aligned}
&\ga^{1/2}\mapsto q^{1/2},\ \  \qk_i^{ 1/2}\mapsto (\varpi_i q^{\alpha_i})^{1/2}, \ \  \h_{i,s}\mapsto \HH^q_i(s),\\
&  \e_{i,k}\mapsto \widetilde{\X}^{+}_i(k),\ \ \f_{i,k}\mapsto \vartheta_i \widetilde{\X}^{-}_i(k),\\
&\forall i=1, \dots, n, \ \ s\in\Z\backslash\{0\}, \ \ k\in\Z,
\end{aligned}
\end{eqnarray}
where $\varpi_i=\sqrt{-q}$ if $\g=\osp(1|2n)^{(1)}$ and $i=n$,  and $\varpi_i=1$ otherwise; $\vartheta_i=\frac{q_i+q_i^{-1}}{q_i-q_i^{-1}}\varpi_i^{-1}$ if $i=n$, and $\vartheta_i=1$ otherwise.
\end{theo}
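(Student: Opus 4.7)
The plan is to mirror the proof of Theorem \ref{them:v.o} wholesale, working throughout in the current form of the Drinfeld realisation from Lemma \ref{lem:dr-f}, and to flag only the places where the substitution $\wp \leadsto q$ forces a genuine change. I would begin by checking the Heisenberg relations \eqref{eq:vo-hh-q} for the $\HH^q_i(s)$, which are immediate from the derivation-versus-multiplication construction and identify $S(\eta^-)$ as the canonical irreducible Fock module for the $\eta$-subalgebra with $\gamma$ acting as $q$. Semidirect intertwining between $\psi_i(z), \varphi_i(z)$ and $\widetilde{X}^\pm_j(w)$ is then a standard Baker--Campbell--Hausdorff computation, essentially identical to the one sketched in the proof of Theorem \ref{them:v.o}. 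The one point that requires attention is that for $i=n$ the denominator $[2k]_{q_n}$ replaces $[k]_{q_n}$ in the exponentials $\widetilde{E}^\pm_n$, $\widetilde{F}^\pm_n$; this modification is engineered to absorb the extra factor $q_n^{2k}+q_n^{-2k}-1$ appearing inside $u_{n,n,k}$ for $\osp(1|2n)^{(1)}$, so that the resulting rational functions still match the data $g_{ij}$ of Lemma \ref{lem:dr-f}.

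The main computational obstacle, exactly as in the proof of Theorem \ref{them:v.o}, is the verification of \eqref{eq:xx-f}. I would compute $\widetilde{X}^+_i(z)\widetilde{X}^-_j(w)$ and $\widetilde{X}^-_j(w)\widetilde{X}^+_i(z)$ as the normal-ordered product
\[
:\widetilde{X}^+_i(z)\widetilde{X}^-_j(w): \;=\; \widetilde{E}^+_i(z)\widetilde{E}^-_j(w)\widetilde{F}^+_i(z)\widetilde{F}^-_j(w):\widetilde{T}^+_i(z)\widetilde{T}^-_j(w):
\]
multiplied by explicit rational functions of $z/w$, using BCH together with \eqref{eq:vo-hh-q}; their super-commutator, after invoking $f(z,w)\delta(w/z)=f(z,z)\delta(w/z)$, collapses onto $\delta(q^{\mp 1}z/w)$ as required, with $\wp$ systematically replaced by $q$. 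The role previously played by the operators $\Phi_i$ is now taken over by the $2$-cocycle $C$ on the central extension $\widehat{\mathcal{Q}}$: one verifies that the defining values of $C(\alpha_i,\alpha_j)$ together with the sign identification $e^\alpha \sim -e^\alpha$ in $\C[\mathcal{Q}]$ produce exactly the signs the $\Phi_i$ produced in Theorem \ref{them:v.o}, so that all cancellations go through unchanged. The constants $\vartheta_i$ in \eqref{eq:action-1} are then fixed by matching residues against the right-hand side of \eqref{eq:xx-f}; the factor $(q_n+q_n^{-1})/(q_n-q_n^{-1})$ at $i=n$ traces back to the extra factor $q_n^k+q_n^{-k}$ that appears when one inverts $[2k]_{q_n}$ versus $[k]_{q_n}$ in the exponentials.

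The Serre relations (A)--(D) of Lemma \ref{lem:dr-f} are handled by computing products of the form $\widetilde{X}^+_i(z_1)\cdots\widetilde{X}^+_i(z_k)\widetilde{X}^+_j(w)\widetilde{X}^+_i(z_{k+1})\cdots$ as normal-ordered expressions multiplied by explicit rational functions of the ratios $z_a/z_b$ and $z_a/w$, then observing that the symmetrised $q$-binomial sum produces a polynomial identity in these rational functions that vanishes identically. This step is routine once the single OPE factors such as $(z-q^{\pm 1}w)^{\pm 1}$ have been read off from the BCH step, and it proceeds exactly as in the $\osp(1|2n)^{(1)}$ part of the proof of Theorem \ref{them:v.o}. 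Finally, irreducibility of $V$ as a $\U_q(\g)$-module follows at once: $S(\eta^-)$ is irreducible over $\U_q(\eta)$ by standard Fock-space theory, and $W$ is irreducible over the corresponding zero-mode subalgebra in both cases $\g=\osp(1|2n)^{(1)}$ and $\g=\Sl(1|2n)^{(2)}$, exactly as in the proof of Theorem \ref{them:v.o}. The type-{\bf 1} claim for $\Sl(1|2n)^{(2)}$ then follows by reading off the $\qk_i$-eigenvalues from \eqref{eq:action-1}, which all lie in $\{q^j:j\in\Z\}$ when $(-1)^{1/\ell(\alpha_n)}=1$.
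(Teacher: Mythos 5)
Your proposal is correct and follows essentially the same route as the paper, which itself proves Theorem \ref{them:v.o-q} only by the remark that ``similar arguments as those in the proof of Theorem \ref{them:v.o}'' apply; your elaboration correctly identifies the only points where the substitution $\wp\leadsto q$ matters, namely the denominator $[2k]_{q_n}$ replacing $\{k\}_{q_n}$ at $i=n$ (consistent with $\{k\}_{q_n}=(-1)^k[2k]_{q_n}$ and the factorisation $u_{n,n,r}=(q_n^{2r}-q_n^{-2r})(q_n^{2r}+q_n^{-2r}-1)$), the $2$-cocycle $C$ on $\hQ$ taking over the sign bookkeeping of the $\Phi_i$, and the normalising constants $\vartheta_i$ fixed by matching against \eqref{eq:xx-f}. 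The only blemish is the final parenthetical about $(-1)^{1/\ell(\alpha_n)}=1$, which is not the right justification for the type-{\bf 1} property of the $\Sl(1|2n)^{(2)}$ module (that follows simply from $\gamma\mapsto q$, $\varpi_i=1$ and $(\alpha_i,\beta)\in\Z$ for $\beta\in\Q$), but this concerns a side remark rather than the theorem itself.
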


\begin{rem}\label{rem:algebra automorphism} The vertex operator representation in Theorem \ref{them:v.o-q} can be changed to that in Theorem \ref{them:v.o} by the following automorphism of $\Uq(\g)$:
\[\begin{aligned}
&\ga\mapsto -\ga,\quad \e_{i,k}\mapsto \e_{i,k},\quad \f_{i,k}\mapsto (-1)^k\f_{i,k},\\
        &\qk_i^{\pm1/2}\mapsto \qk_i^{\pm1/2},\quad \h_{i,k}\mapsto (-1)^{|k|/2}\h_{i,k}, \quad \hh^{\pm}_{i,k}\mapsto (-1)^{\pm k/2}\hh^{\pm}_{i,k}.
\end{aligned}
\]
\end{rem}

%==========================================================%

\section{Finite dimensional irreducible representations}
In this section, we classify the finite dimensional irreducible representations of the quantum affine superalgebra $\Uq(\g)$ for the affine Lie superalgebras $\g$ in \eqref{eq:g}. We always assume that $\Uq(\g)$-modules are $\Z_2$-graded (cf. Remark \ref{rem:gradings}).

We choose the normalisation for the bilinear form on the weight space of $\g$ so that
$
(\alpha_n,\alpha_n)=1.
$

%The method used is similar to that developed in \cite{CP1,CP2}
%for ordinary quantum affine algebras.

%=============================================%

%\subsection{Highest weight modules}
\subsection{Classification of finite dimensional simple modules}
We fix the triangular decomposition \eqref{eq:triangular-2} for $\Uq(\g)$, and consider highest weight $\Uq(\g)$-modules with respect to this
triangular decomposition.

Let $v_0$ be a highest weight vector in a highest weight  $\Uq(\g)$-module, then for all $i$ and $r$,
\begin{eqnarray}\label{eq:def-hw-vector}
\begin{aligned}
\e_{i,r}\cdot v_0=0,\quad \hh^{\pm}_{i,r}\cdot v_0=\up^{\pm}_{i,r}v_0,\quad \ga^{1/2}\cdot v_0=\up^{1/2}v_0,
\end{aligned}
\end{eqnarray}
for some scalars $\up^{\pm}_{i,r}$ and $\up^{1/2}$, where $\up^{1/2}$ is invertible and $\up^{+}_{i,0}\up^{-}_{i,0}=1$. We define the following formal power series in a variable $x$
\[
\up^{+}_i(x):=\sum_{r=0}^{\infty}\up^{+}_{i,r}x^r, \quad   \up^{-}_i(x):=\sum_{r=0}^{\infty}\up^{-}_{i,-r}x^{-r}, \quad \forall i.
\]

A $\Uq(\g)$-module is said to be at level $0$ if $\gamma$ acts by $\pm\rm{id}$.
\iffalse
In this section, we always assume $\gamma^{1/2}$ acts by identity  because of the automorphism given by
\begin{align}\label{eq:auto-2}
\iota_0: \quad &\gamma^{1/2}\mapsto -\gamma^{1/2}, \quad \ef_{i,k}\mapsto (-1)^k\ef_{i,k},\quad \qk_i\mapsto \qk_i,\quad \h_{i,k}\mapsto\h_{i,k}.
\end{align}
\fi

By considering the commutation relations of $\hh^{\pm}_{i,r}$,
it is easy to show  \cite{CP0, CP1}
that finite dimensional modules must be at level $0$.
The proof of \cite[Proposition 3.2]{CP0} can be adapted verbatim to prove the following result.
\begin{prop}\label{prop:f.m-hwr}
Every finite dimensional simple $\Uq(\g)$-module  is a level $0$ highest weight module
with respect to the triangular decomposition \eqref{eq:triangular-2}.
\end{prop}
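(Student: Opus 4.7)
My plan is to adapt the strategy of Chari--Pressley's proof of \cite[Proposition 3.2]{CP0}, which only requires minor modifications to accommodate the $\Z_2$-grading. Let $V$ be a finite dimensional simple $\Uq(\g)$-module. Since $\ga^{1/2}$ is central it acts on $V$ as an invertible scalar $c^{1/2} \in K^{\times}$. The first step is to prove that $c = \pm 1$, i.e.\ $V$ is at level $0$. Because each $\h_{i,r}$ is an even generator, its action on $V$ is a block-diagonal operator with respect to the $\Z_2$-grading, so the ordinary trace of the commutator $[\h_{i,r},\h_{j,-r}]$ vanishes. Combined with \eqref{eq:hh}, this yields
\[
u_{i,j,r}\bigl(c^r-c^{-r}\bigr) \dim V = 0,
\]
and taking $i=j$ with $r$ such that $u_{i,i,r}\neq 0$ (available from \eqref{eq:u-def}) forces $c^2 = 1$.

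At level $0$ the right hand side of \eqref{eq:hh} vanishes identically, so $\U_q^0$ is commutative. I would then decompose $V$ into generalized $\qk$-weight spaces $V_\lambda$: the $\qk_i$ form a commuting family of invertible operators on a finite dimensional space, producing only finitely many joint weights. The root lattice equips the weight set with a partial order under which $V$ admits maximal weights; fix such a maximal $\lambda$. Since $\e_{i,r}$ shifts the $\qk_j$-weight by $q_j^{a_{ji}}$ independently of $r$, every vector in $V_\lambda$ has image under $\e_{i,r}$ lying in $V_{\lambda + \alpha_i} = 0$. Hence $V_\lambda$ is annihilated by the whole of $\U_q^+$.

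Because each $\hh^{\pm}_{i,r}$ commutes with every $\qk_j$, the commutative algebra $\U_q^0$ preserves $V_\lambda$. On this finite dimensional space the commuting family $\{\hh^{\pm}_{i,r}\}$ admits a common eigenvector $v_0$ (after extending scalars to $\overline{K}$ if necessary). Then $v_0$ satisfies the conditions of \eqref{eq:def-hw-vector}, and the cyclic submodule $\Uq(\g)\cdot v_0$ is a nonzero highest weight submodule of $V$, hence all of $V$ by simplicity.

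The main obstacle is in the last paragraph: promoting $v_0$ from a mere generalized eigenvector to an actual simultaneous eigenvector for the infinite family $\{\hh^{\pm}_{i,r}\}$ over the non-algebraically-closed field $K=\C(q^{1/2})$. The standard remedy, carried out in \cite{CP0}, is to work over $\overline{K}$, select a genuine common eigenvector there, and then use simplicity together with a descent argument to conclude that the scalars $\up^{\pm}_{i,r}$ of \eqref{eq:def-hw-vector} must already lie in $K$. For the super case one has the additional requirement that $v_0$ be $\Z_2$-homogeneous; since each $V_\lambda$ inherits a $\Z_2$-grading and each $\hh^{\pm}_{i,r}$ is even, the grading is preserved by $\U_q^0$ and a homogeneous common eigenvector can be chosen. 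No further super-theoretic complications intervene, justifying the statement that \cite[Proposition 3.2]{CP0} adapts with no essential change.
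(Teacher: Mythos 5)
Your proposal is correct and follows exactly the route the paper intends: the paper's proof of Proposition \ref{prop:f.m-hwr} consists precisely of the trace argument on \eqref{eq:hh} to force level $0$ (citing \cite{CP0, CP1}) followed by the assertion that the proof of \cite[Proposition 3.2]{CP0} adapts verbatim, which is the weight-space/maximal-weight/common-eigenvector argument you spell out. The only point worth tightening is the level-$0$ step: to get $c^{2}=1$ outright (rather than $c$ a root of unity) take $i=j=n$ and $r=1$, for which $u_{n,n,1}\neq 0$ in all three cases and $(n,1)\in\mathcal{I}_{\g}$ even for $\osp(2|2n)^{(2)}$.
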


The following theorem is the main result of this section.  Its proof will be given in Section \ref{sect:proof-fd}.

\begin{theo}\label{theo-finite module}
Let $\g=\osp(1|2n)^{(1)}$, $\Sl(1|2n)^{(2)}$ and $\osp(2|2n)^{(2)}$. A simple $\Uq(\g)$-module $V$ is finite dimensional if and only if it can be twisted by some automorphism $\iota_\varepsilon$ into a level $0$ simple highest weight
module satisfying the following conditions:
there exist polynomials $P_i\in\C[x]$ $(i=1, 2, \dots, n)$ with constant term $1$ such that
\begin{eqnarray}\label{eq:hw-polys}
\begin{aligned}
&\up^{+}_i(x) =t_i^{c_i\cdot {\rm deg} P_i}\frac{P_i\left((-1)^{n-i}t_i^{-2c_i}x^{d_i}\right)}{P_i\left((-1)^{n-i}x^{d_i}\right)}=\up^{-}_i(x),
\end{aligned}
\end{eqnarray}
where the equalities should be interpreted as follows: the left side is equal to the middle expression expanded at $0$, and the right side to that expanded at $\infty$.
In the above, $t_i = \left(\sqrt{-1}q^{1/2}\right)^{(\alpha_i, \alpha_i)}$,  and $c_i$ and $d_i$ are defined by
\[\begin{array}{l l}
\g= \osp(1|2n)^{(1)}: & d_i = 1, \quad c_i=\begin{cases} 1, & i\neq n, \\
                                                             2, & i=n;
                                       \end{cases}\\
\g=\Sl(1|2n)^{(2)}: &d_i=c_i=1;\\
\g=\osp(2|2n)^{(2)}:  &d_i=c_i=\begin{cases} 1, & i=n, \\
                                                             2, & i\neq n.
                                       \end{cases}
\end{array}
\]
\end{theo}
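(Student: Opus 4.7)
The plan is to reduce the classification to the Chari--Pressley theorem \cite{CP1, CP2} for the ordinary quantum affine algebra $\U_t(\g')$ via the quantum correspondences of Section~\ref{sec:DR}. Specifically, I will compose the chain
\[
\UU_q(\g) \xrightarrow{\psi} \UU_t(\g') \xrightarrow{\rho} \D_t(\g') \xrightarrow{\varphi^{-1}} \D_q(\g)
\]
of isomorphisms from Lemma~\ref{lem:correspon}, \eqref{eq:dr-iso-A} and Theorem~\ref{lem:iso}, so that finite dimensional simple modules can be transferred in both directions. This requires extending any $\Uq(\g)$-module to a $\UU_q(\g)$-module, which I will do by letting $\mathrm{G}$ act on weight vectors via the parity character $\sigma_i \mapsto (-1)^{(\alpha_i,\,\cdot\,)}$ (and symmetrically for $\g'$).

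By Proposition~\ref{prop:f.m-hwr}, any finite dimensional simple $\Uq(\g)$-module $V$ is a level~$0$ simple highest weight module relative to~\eqref{eq:triangular-2}, so it suffices to identify the admissible highest weight series $\up^{\pm}_i(x)$. After the extension and transfer, $V$ becomes a finite dimensional simple $\U_t(\g')$-module, and the Chari--Pressley classification yields polynomials $P'_i \in \C[x]$ with $P'_i(0)=1$ such that
\[
\up'^{\pm}_i(x) \;=\; t_i^{\deg P'_i}\, \frac{P'_i(t_i^{-2}x)}{P'_i(x)},
\]
with the usual expansions at $0$ and $\infty$. Conversely, each tuple $(P'_i)$ produces a finite dimensional simple $\U_t(\g')$-module that transfers back (possibly after a twist by some $\iota_\varepsilon$, needed to absorb the $\sigma'_i$-factors in Lemma~\ref{lem:correspon}) to the desired $\Uq(\g)$-module, so that the ``if'' and ``only if'' directions are handled simultaneously.

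To obtain the precise formula \eqref{eq:hw-polys} I will push the Chari--Pressley relation through the correspondence~$\varphi$. By Theorem~\ref{lem:iso} $\h_{i,r}$ corresponds to $-o(i)^{rc}\h'_{i,r}$ with $o(i)=(-1)^{n-i}$, which produces the sign $(-1)^{n-i}$ inside $P_i$. The integer $d_i$ arises because the scalars $u_{i,j,r}$ in~\eqref{eq:u-def} carry a factor $1+(-1)^r$ when $\g=\osp(2|2n)^{(2)}$ and $i<n$, forcing $\hh^\pm_{i,r}$ to annihilate the highest weight vector for odd $r$ so that $\up^\pm_i(x)$ is a series in $x^2$; in all other cases $d_i=1$. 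The exponent $c_i$ similarly reflects the doubled structure of $u_{n,n,r}$ for $\g=\osp(1|2n)^{(1)}$, which effectively replaces $t_n^{-2}$ by $t_n^{-4}$ in the polynomial argument, while $c_i=1$ elsewhere.

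The main obstacle will be exactly this bookkeeping: matching the sign $(-1)^{n-i}$, the exponents $c_i$ and $d_i$, and the parameter $t_i=(\sqrt{-1}q^{1/2})^{(\alpha_i,\alpha_i)}$ against the various twists appearing in the correspondence (the $\sigma'_i$-factors in $\psi$, the $o(i)^{rc}$-factors and the minus sign on $\h'_{i,r}$ in $\varphi$), together with the smash product extension and the $\iota_\varepsilon$-twist, all under the normalisation $(\alpha_n,\alpha_n)=1$ fixed in the theorem. Once these are reconciled, the classification \eqref{eq:hw-polys} follows directly from the Chari--Pressley theorem and finite-dimensionality is preserved by every link in the chain.
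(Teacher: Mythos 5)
Your proposal follows essentially the same route as the paper: reduce to level $0$ highest weight modules via Proposition \ref{prop:f.m-hwr}, pass through the smash products and the quantum correspondence of Theorem \ref{lem:iso} to identify the module categories with those of $\fU_t(\g')$, invoke the Chari--Pressley classification (Proposition \ref{prop-fin.dim}), and track the factors $o(i)^{rc}=(-1)^{(n-i)rc}$ and the parameters $t_i$, $c_i$, $d_i$ back through $\varphi$, absorbing the residual $\sigma'_i$-twists with an automorphism $\iota_\varepsilon$. This is correct and matches the paper's argument in all essential respects.
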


As an immediate corollary of Theorem \ref{theo-finite module}, we have the following result.
\begin{coro}\label{prop:f.m-type}
Every finite dimensional simple  $\Uq(\g)$-module can be obtained from  a level $0$
type-{\bf {1}} or type-{\bf {s}} module  by twisting $\Uq(\g)$ with some automorphism given in \eqref{eq:auto-1}.
\end{coro}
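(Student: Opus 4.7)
The plan is to combine Theorem \ref{theo-finite module} with the explicit form of the automorphisms $\iota_\varepsilon$ in \eqref{eq:auto-1}. By Theorem \ref{theo-finite module}, after an initial twist one may assume $V$ is a level $0$ highest weight module whose highest weights are determined by polynomials $P_i\in\C[x]$ via \eqref{eq:hw-polys}. Evaluating at $x=0$ gives the eigenvalue of $\kappa_i = \hh^+_{i,0}$ on the highest weight vector as $t_i^{c_i\,\deg P_i}$ with $t_i = (\sqrt{-1}q^{1/2})^{(\alpha_i,\alpha_i)}$. What remains is to verify that, after a further twist by some $\iota_\varepsilon$, all the $k_i$'s act with eigenvalues in the patterns required by Definition \ref{def:type-s} or its type-$\mathbf{1}$ analogue.

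For each even simple root $\alpha_i$ one has $(\alpha_i,\alpha_i)\in 2\Z$, so $t_i = (-q)^{(\alpha_i,\alpha_i)/2}$ and $\up^+_{i,0}= \pm q^{m_i}$ for some $m_i \in \Z$; the sign can be absorbed by choosing $\varepsilon_i\in\{\pm 1\}$, putting the eigenvalue in $q_i^{\Z}$. For the odd simple root $\alpha_n$, normalised by $(\alpha_n,\alpha_n)=1$, one has $t_n = \sqrt{-1}q^{1/2}$, and the exponent $c_n$ prescribed in Theorem \ref{theo-finite module} is precisely what makes the analysis work. When $\g = \osp(1|2n)^{(1)}$ the value $c_n = 2$ gives $\up^+_{n,0} = (-q)^{\deg P_n}$, which is type-$\mathbf{1}$ after a sign twist; when $\g = \Sl(1|2n)^{(2)}$ or $\osp(2|2n)^{(2)}$ the value $c_n = 1$ gives $\up^+_{n,0} = (\sqrt{-1}q^{1/2})^{\deg P_n}$, which is type-$\mathbf{1}$ when $\deg P_n$ is even and reduces to $\pm\sqrt{-1}\,q^{\ell+1/2}$ (type-$\mathbf{s}$) when $\deg P_n=2\ell+1$ is odd. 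For $\osp(2|2n)^{(2)}$ the eigenvalue of $k_0$ is then recovered from Theorem \ref{them:DR-iso} via $k_0 = \gamma\kappa_{\g}^{-1}$, $\gamma = \pm 1$ at level $0$, which explains the ``either $k_0$, $k_n$, or both'' phrasing of Definition \ref{def:type-s}.

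To transport the conclusion from the highest weight vector to all of $V$, I would use the Drinfeld relation $\kappa_i\,\xi^-_{j,s} = q_i^{-a_{ij}}\,\xi^-_{j,s}\,\kappa_i$ with $a_{ij}\in\Z$: since the module is cyclic under the lowering operators $\xi^-_{j,s}$, each $k_i$-eigenvalue on any weight space differs from the highest one by an integer power of $q_i$, hence remains either in $q_i^{\Z}$ or in $\sqrt{-1}\,q^{\Z+1/2}$ as dictated by the highest weight analysis above.

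The hard part will be purely combinatorial bookkeeping: one must verify that a single choice of signs $\varepsilon = (\varepsilon_0,\dots,\varepsilon_n)$ works simultaneously for all the $k_i$'s across the three families of $\g$. This reduces to a finite case check driven by the residues of $\deg P_n$ modulo $2$ and the sign of $\gamma$, with $\varepsilon_0$ forced by the relation $k_0 = \gamma\kappa_{\g}^{-1}$ once $\varepsilon_1,\dots,\varepsilon_n$ are fixed. No new representation-theoretic input beyond \eqref{eq:hw-polys}, the isomorphism $\Psi$ of Theorem \ref{them:DR-iso}, and the explicit form of $\iota_\varepsilon$ should be required.
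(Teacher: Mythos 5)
Your argument is correct and is essentially the paper's own reasoning: the paper gives no separate proof, presenting the corollary as an immediate consequence of Theorem \ref{theo-finite module}, and what you have written is precisely the evaluation of \eqref{eq:hw-polys} at $x=0$ (giving $\up^+_{i,0}=t_i^{c_i\deg P_i}$) together with the sign bookkeeping that makes ``immediate'' honest. Two small tightenings: the ``single choice of signs'' worry is vacuous because the $\varepsilon_i$ in \eqref{eq:auto-1} are chosen independently for each node, and in the transport step for the spinorial case you should note that $a_{nj}$ is even for all $j$ in these root systems, so that $q_n^{a_{nj}}\in q^{\Z}$ and the $k_n$-eigenvalues stay in $\sqrt{-1}\,q^{\Z+1/2}$ on every weight space.
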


%\begin{rem} Finite dimensional simple $\Uq(\g)$-modules of type-{\bf s} exist
%for $\g$ being $\Sl(1|2n)^{(2)}$ or $\osp(2|2n)^{(2)}$, but not $\osp(1|2n)^{(1)}$.
%\end{rem}

\begin{rem} \label{rem:gradings}
Any non $\Z_2$-graded simple highest weight $\Uq(\g)$-module can be regarded as graded by simply assign a parity to its highest weight vector.
\end{rem}

%================================================%

\subsection{Proof of Theorem \ref{theo-finite module}}\label{sect:proof-fd}
The theorem can be proven directly by using the method of \cite{CP1, CP2}. However, there is an easier approach based on quantum correspondences between affine Lie superalgebras developed in \cite{Z92b, XZ, Z2}. The
quantum correspondences allow one to translate results on finite dimensional simple modules of ordinary quantum affine algebras in \cite{CP1, CP2} to the quantum affine superalgebras under consideration. We will follow the latter approach here.

\subsubsection{Facts on ordinary quantum affine algebras}
Corresponding to each $\g$ in \eqref{eq:g},  we have an ordinary (i.e., non-super) affine Lie algebra $\g'$  given in Table \ref{tbl:g}, which has the same Cartan matrix as $\g$. We denote by $\{\alpha'_1, \dots, \alpha'_n\}$ the set of simple roots realising the Cartan matrix of $\g'$, and normalize the  bilinear form on the weight space of $\g'$ so that  $(\alpha'_n,\alpha'_n)=1$

Let $\x^\pm_{j, r}$, $\hh'^{\pm}_{i,r}$, and $\gamma'^{\pm1/2}$
($1\le i, j\ge n$, \ $r\in\Z$)  be the generators of the quantum affine algebra $\U_t(\g')$ over $\C(t^{1/2})$ (see \cite{CP1, CP2} for details).
Highest weight $\U_t(\g')$-modules are defined in a similar way as  for $\Uq(\g)$ earlier.
A highest weight $\U_t(\g')$ is generated by a highest weight vector $v'_0$, which satisfies
\begin{eqnarray}\label{eq:V'-hwv}
\x^{+}_{i,r}\cdot v'_0=0,\quad \hh'^{\pm}_{i,r}\cdot v'_0=\up'^{\pm}_{i,r}v'_0,\quad \ga'^{1/2}\cdot v'_0={\up'}^{1/2} v'_0,
\end{eqnarray}
where $\up'^{\pm}_{i,r}\in\C$, with ${\up^\prime}^{1/2}\in\C^*$ and ${\up^\prime}^{+}_{i,0}{\up^\prime}^{-}_{i,0}=1$. The module is at level $0$ if $\up'=\pm1$.

Recall that weight modules over $\U_t(\g')$ can always be twisted to type-{\bf 1} modules by automorphisms analogous to \eqref{eq:auto-1}.
The following result is proved in \cite{CP1,CP2}.
\begin{prop}[\cite{CP1,CP2}]\label{prop-fin.dim}
Let $\g'=A_{2n}^{(2)}, B_n^{(1)}, D_{n+1}^{(2)}$.  Every finite dimensional simple $\U_t(\g')$-module is a highest weight module at level $0$.
A level $0$ simple highest weight  $\U_t(\g')$-module of type-{\bf 1} is finite dimensional if and only if there exist polynomials $Q_i\in\C[x]$ ($1\le i\le n$) with constant term $1$ such that
\begin{align}\label{eq:ordinary-hw-polys}
\sum_{r=0}^{\infty}\up'^{+}_{i,r}x^r=t_i^{c_i\cdot {\rm deg} Q_i}\frac{Q_i\left(t_i^{-2c_i}x^{d_i}\right)}{Q_i(x^{d_i})}=\sum_{r=0}^{\infty}\up^{-}_{i,-r}x^{-r},
\end{align}
which holds in the same sense as \eqref{eq:hw-polys}.
Here $t_i=t^{(\alpha'_i,\alpha'_i)/2}$, and the constants $c_i$ and $d_i$ are those defined in Theorem \ref{theo-finite module} for the affine superalgebra $\g$ corresponding to $\g'$ in Table \ref{tbl:g}.
\end{prop}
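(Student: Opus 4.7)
The final statement (Proposition \ref{prop-fin.dim}) is attributed to \cite{CP1,CP2}; accordingly the plan is to sketch the Chari--Pressley strategy as adapted to the three affine algebras $A_{2n}^{(2)}, B_n^{(1)}, D_{n+1}^{(2)}$. The proof decomposes into two independent assertions: (i) every finite dimensional simple module is a level $0$ highest weight module, and (ii) the Drinfeld polynomials classify such modules. I would attack them in that order.

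For (i), start from an arbitrary finite dimensional simple $\U_t(\g')$-module $V$. Because the $\qk'_i$ commute with the Cartan currents $\hh'^\pm_{i,r}$ and with $\ga'^{1/2}$, one first decomposes $V$ into joint generalised eigenspaces for this commutative family. Finite dimensionality combined with the Heisenberg relation $[\h'_{i,r},\h'_{i,-r}] = \frac{u'_{i,i,r}(\ga'^r - \ga'^{-r})}{r(t-t^{-1})^2}$ forces $\ga'$ to act by $\pm 1$: if the eigenvalue were otherwise, repeated application of $\h'_{i,r}$ would generate an infinite dimensional Fock space inside $V$. Next, using the weight grading determined by the Cartan part, pick a joint eigenvector $v'_0$ whose weight is extremal (e.g.\ of minimal height in some ordering induced by the $\hh'^+$--grading). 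The defining relations of $\Dr_t(\g')$ then force $\x^+_{i,r}\cdot v'_0 = 0$ for all $(i,r)$, because otherwise one would exit the chosen extremal weight space. This makes $v'_0$ a highest weight vector in the sense of \eqref{eq:V'-hwv}, and by simplicity $V$ is its cyclic span.

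For the necessity part of (ii), the key reduction is to the rank--one affine case. For each $i$, the subalgebra of $\U_t(\g')$ generated by $\x^\pm_{i,r}, \h'_{i,r}, \qk'^{\pm1}_i, \ga'^{\pm 1/2}$ (modulo rescaling the Cartan currents by the factors $u'_{i,i,r}/(t-t^{-1})$) realises a quotient of the quantum affine algebra $U_{t_i}(\widehat{\Sl_2})$. Restricting $V$ to this subalgebra yields a finite dimensional highest weight $U_{t_i}(\widehat{\Sl_2})$-module, which by the classical $\Sl_2$ case of the Chari--Pressley theorem has its highest weight encoded by a polynomial $Q_i\in\C[x]$ with constant term $1$, and with the ratio formula \eqref{eq:ordinary-hw-polys} holding when expanded at $0$ and at $\infty$ respectively. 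The substitutions $x\to x^{d_i}$ and sign twists $(-1)^{n-i}$ relative to the normalisation $(\alpha'_n,\alpha'_n)=1$ account for the different short/long root lengths and the twisting of $\g'$; in the twisted cases $A_{2n}^{(2)}$ and $D_{n+1}^{(2)}$ the doubled index $d_i=2$ at the appropriate nodes comes from the fact that only even-mode Drinfeld currents survive there, which is already visible in the scalars $u'_{i,j,r}$ of \eqref{eq:u'}.

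For sufficiency, one must construct, for every tuple $(Q_1,\dots,Q_n)$ of polynomials satisfying the normalisation, a finite dimensional simple module whose highest weight realises \eqref{eq:ordinary-hw-polys}. The standard route is to build fundamental modules $V(a,\varpi_i)$ for each $i$ and each spectral parameter $a\in\C^*$ (e.g.\ as evaluation modules, as subquotients of Kirillov--Reshetikhin modules, or via Drinfeld's coproduct and the $L$-operator formalism), verify that they have the one-root Drinfeld polynomial $(1-ax)$ at node $i$ and trivial polynomials elsewhere, and then take tensor products: the simple head of $\bigotimes_{i,k} V(a_{i,k},\varpi_i)$ has Drinfeld polynomial $\prod_k(1-a_{i,k}x)$ at node $i$. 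The main obstacle is exactly this construction step for the twisted algebras, where evaluation homomorphisms are not available in the uniform way they are for type $A$; here one invokes Beck's isomorphism between twisted and untwisted Drinfeld realisations, or the explicit vertex operator constructions of fundamental modules, and then checks by a direct Chari--Moura style highest weight calculation that the constructed modules are simple with the prescribed Drinfeld data. Combining the two directions completes the proof.
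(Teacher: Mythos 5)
The paper offers no proof of Proposition \ref{prop-fin.dim}: it is imported verbatim from Chari--Pressley, with the sentence ``The following result is proved in \cite{CP1,CP2}'' immediately preceding it. So there is no internal argument to compare yours against; what you have written is a reconstruction of the Chari--Pressley proof, and in outline it follows the correct strategy --- level $0$ forced on finite dimensional modules by the Heisenberg relations, an extremal weight vector supplying the highest weight structure, rank-one reduction for necessity, and fundamental modules plus tensor products for sufficiency.

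Two points in your sketch need correcting if it is to stand as a proof rather than a plan. First, the rank-one reduction does not uniformly land in $U_{t_i}(A_1^{(1)})$. For $A_{2n}^{(2)}$ the subalgebra attached to the node $n$ (see the scalars $u'_{n,n,r}$ in \eqref{eq:u'}) is a copy of $U_t(A_2^{(2)})$, and the separate classification of its finite dimensional simple modules is precisely the new content of \cite{CP2} relative to \cite{CP1}; it is what produces $c_n=2$ with $d_n=1$ there. Your explanation via ``only even-mode currents survive'' is correct for the nodes $i<n$ of $D_{n+1}^{(2)}$ (whence $d_i=2$), but it does not cover the $A_2^{(2)}$ node, and collapsing everything to $\widehat{\Sl}_2$ would yield the wrong polynomial relation at that node. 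Second, the sign $(-1)^{n-i}$ you invoke does not occur in \eqref{eq:ordinary-hw-polys} at all; it belongs only to the super formula \eqref{eq:hw-polys} and is generated by the quantum correspondence $\varphi$ of Theorem \ref{lem:iso} (the $o(i)^{cr}$ rescalings of the Drinfeld generators), not by anything internal to $\U_t(\g')$. Beyond these, the sufficiency direction --- constructing the fundamental modules for the twisted algebras and verifying their Drinfeld data --- is the genuinely hard step, and your sketch defers it entirely to citations; that is defensible for a result the paper itself only quotes, but it should be flagged as such rather than presented as a completed argument.
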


\subsubsection{Proof of Theorem \ref{theo-finite module}}
With the preparations above, we can now prove Theorem \ref{theo-finite module}. Note that Theorem \ref{lem:iso} is stated for $\hh_{i,r}^\pm$ instead of $\kappa_{i,s}$  and with the $o(i)$ there worked out explicitly. By using Theorem \ref{lem:iso}, we can identify the categories of $\fU_q(\g)$-modules and $\fU_{-q}(\g')$-modules.  Then Theorem \ref{theo-finite module} is equivalent to Proposition \ref{prop-fin.dim} under this identification.  Let us describe this in more detail.

If a $\fU_{-q}(\g')$-module is generated by a $\U_{-q}(\g')$ highest weight vector that is an eigenvector of the $\sigma'_i$,
it restricts to a simple $\U_{-q}(\g')$-module.  All highest weight $\U_{-q}(\g')$-modules can be obtained this way, but note that different $\fU_{-q}(\g')$-modules of this type may restrict to the same $\U_{-q}(\g')$-module.
Also any highest weight $\U_{-q}(\g')$-module $V'$ with a highest weight vector $v'_0$ can be lifted to a
$\fU_{-q}(\g')$-module by endowing $\C(q^{1/2}) v'_0$ with a  $G'$-module structure (there are many possibilities).
The same discussion applies to $\fU_q(\g)$- and $\U_q(\g)$-modules.

Assume that $V'$ is a simple $\fU_{-q}(\g')$-module generated by a $\U_{-q}(\g')$ highest weight vector $v'_0$ such that $\C(q^{1/2}) v'_0$ is the $1$-dimensional trivial $G'$-module. Then $V'$ is finite dimensional if and only if
it is at level $0$ and the scalars $\up'^{\pm}_{i,r}$ (cf. \eqref{eq:V'-hwv}) satisfy the given condition of Proposition \ref{prop-fin.dim} for some monic polynomials $Q_i$ with $t^{1/2}=\sqrt{-1}q^{1/2}$.
By Theorem \ref{lem:iso},  $V'$ naturally admits the $\fU_q(\g)$-action
\[
\fU_q(\g)\otimes V' \longrightarrow V', \quad x\otimes V'\mapsto \varphi(x)v', \quad \forall x\in \fU_q(\g), \ v'\in V'.
\]
It restricts to a simple highest weight $\U_q(\g)$-module at level $0$ such that
\[
\hh^{\pm}_{i,r}\cdot v'_0=\up^{\pm}_{i,r}v'_0, \
\text{\ \ with\ \ } \up^{+}_{i,r}=(-1)^{(n-i)r\epsilon}\up'^{+}_{i,r}.
\]
Clearly, the $\up^{+}_{i,r}$ satisfy the condition given in Theorem \ref{theo-finite module}.

As a $\fU_q(\g)$-module, $V'$ is naturally $\Z_2$-graded.
Recall from \cite{XZ} that there exists an element $u\in G$ which is the grading operator in the sense that $u x u^{-1} = (-1)^{[x]} x$ for all homogeneous $x\in\Uq(\g)$. The even and odd subspaces of $V'$ are then the $\pm 1$-eigenspaces of $u$.

The above arguments go through in the opposite direction, i.e., from
$\fU_q(\g)$-modules to $\fU_{-q}(\g')$-modules.  This proves Theorem \ref{theo-finite module}.
%\end{proof}

\section*{Acknowledgements}
This research was supported by National Natural Science Foundation of China Grants No. 11301130,  No. 11431010,
and Australian Research Council Discovery-Project Grant DP140103239.

\end{document}